\theoremstyle{plain}
\newtheorem{teo}{Theorem}[section]
\newtheorem{lem}[teo]{Lemma}
\newtheorem{cor}[teo]{Corollary}
\newtheorem*{teo*}{Theorem}
\newtheorem*{cor*}{Corollary}
\newtheorem*{definition*}{Definition}
\newtheorem*{hypA}{Hypotheses A}
\theoremstyle{definition}
\theoremstyle{remark}
\newtheorem{rem}[teo]{Remark}
\DeclareMathOperator{\GL}{GL}
\DeclareMathOperator{\SL}{SL}
\DeclareMathOperator{\PSL}{PSL}
\DeclareMathOperator{\OO}{O}
\DeclareMathOperator{\Z}{Z}		
\DeclareMathOperator{\C}{C}
\DeclareMathOperator{\PSp}{PSp}		      		
\DeclareMathOperator{\Sp}{Sp}		      		
\DeclareMathOperator{\PSU}{PSU}		      		
\DeclareMathOperator{\SU}{SU}
\DeclareMathOperator{\Pom}{P\Omega}
\newcommand{\FF}{\mathbf{F}}
\newcommand{\ZZ}{\mathbf{Z}}
\newcommand{\CC}{\mathbf{C}}
\title[Jordan's Theorem]{Boosting an analogue of Jordan's theorem for finite groups}
\author[I. Mundet]{Ignasi Mundet i Riera}
\address{Departament d'\`Algebra i Geometria\\
Facultat de Matem\`atiques\\
Universitat de Barcelona\\
Gran Via de les Corts Catalanes 585\\
08007 Barcelona \\
Spain}
\email{ignasi.mundet@ub.edu}
\author[A. Turull]{Alexandre Turull}
\address{Department of Mathematics \\
University of Florida \\
Gainesville, FL 32611 \\
USA}
\email{turull@ufl.edu}
\date{}
\begin{document}

\begin{abstract}
Let $\mathcal C$ be a set of finite groups which is closed under taking subgroups
and let $d$ and $M$ be positive integers.
Suppose that for any $G\in\mathcal C$
whose order is divisible by at most two distinct primes there exists an abelian
subgroup $A\subseteq G$ such that $A$ is generated by at most $d$ elements
and $[G : A] \le M$.
We prove that there exists a positive constant $C_0$
such that any $G \in \mathcal C$ has an abelian subgroup
$A$ satisfying $[G : A] \le C_0$, and $A$ can be
generated by at most $d$ elements. We also prove some related results.
Our proofs use the Classification of Finite Simple
Groups.
\end{abstract}

\keywords{finite groups, bounds, Jordan Theorem}
\subjclass[2010]{Primary: 20D60; Secondary: 57S17}

\maketitle

\section{Introduction}

A celebrated theorem of C. Jordan \cite{jordan} states the following.

\begin{teo*}[C. Jordan]
For any $d$ there exists a constant $C$ such that any finite subgroup
$G$ of $\GL(d,\CC)$ has an abelian subgroup
$A\subseteq G$ of index  at most $C$, and $A$ can be
generated by at most $d$ elements.
\end{teo*}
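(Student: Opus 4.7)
The plan is to follow the classical geometric argument inside the unitary group $U(d)$. First I would apply Weyl's averaging trick, replacing the standard Hermitian form on $\CC^d$ by its $G$-average $\frac{1}{|G|}\sum_{g\in G}\langle g\cdot,g\cdot\rangle$; this form is $G$-invariant and positive definite, so after conjugating to one of its orthonormal bases we may assume $G\subseteq U(d)$. The quantitative engine is the commutator estimate
\[
\|[g,h]-I\| \;\le\; 2\,\|g-I\|\,\|h-I\|
\]
in the operator norm, which follows from the identity $[g,h]-I = g^{-1}h^{-1}\bigl((g-I)(h-I)-(h-I)(g-I)\bigr)$ together with the fact that $g^{-1},h^{-1}$ are unitary.

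Fix a small radius $r<1/2$, to be chosen below, and set $V := \{u\in U(d):\|u-I\|<r\}$ and $H := \langle G\cap V\rangle$. I claim $H$ is abelian. Choose $s\in G\cap V$ minimizing $\|u-I\|$ over non-identity elements of $G\cap V$. For any $t\in G\cap V$ the commutator estimate gives $[s,t]\in G$ with $\|[s,t]-I\|<\|s-I\|<r$, so $[s,t]\in G\cap V$; by minimality, $[s,t]=I$. Hence $s$ is central in $H$. Iterating the same minimization inside the successive derived or central subgroups (which all lie in $V$ thanks to the shrinking provided by the estimate), or, cleanly, linearizing via the matrix logarithm $\log\colon V\to\mathfrak{u}(d)$ and showing that the Lie-algebra commutators of $\log(G\cap V)$ all vanish, yields $H$ abelian.

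To control the index $[G:H]$ I would use a volume-packing argument in the compact Lie group $U(d)$. If $g_1,\dots,g_n\in G$ are coset representatives of $H$, then $g_i^{-1}g_j\notin V$ for $i\neq j$, so $\|g_i-g_j\|=\|g_i^{-1}g_j-I\|\ge r$; therefore the open balls $B(g_i,r/2)$ in $U(d)$ are pairwise disjoint. Since $U(d)$ has finite Haar volume and each ball has Haar volume bounded below by an explicit function of $r$ and $d$, we obtain $n\le C(d)$ for a constant depending only on $d$. Finally, $H$ is a finite abelian subgroup of $U(d)$, hence simultaneously diagonalizable, so it embeds in the diagonal torus $T\cong(S^1)^d$; since the $p$-socle of $T$ is $(\ZZ/p)^d$ for every prime $p$, any finite subgroup of $T$ has minimal number of generators at most $d$, and in particular so does $H$.

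The main obstacle is the abelianness claim in the second paragraph: the commutator estimate yields nilpotence (indeed, of bounded class) almost for free, but pushing this all the way to commutativity requires the minimum-norm minimization to be iterated carefully through the whole derived series, or equivalently a Baker--Campbell--Hausdorff-type linearization on $V$, and demands that $r$ be chosen small enough that every iterated commutator arising in the argument remains inside $V$. The averaging, the packing bound, and the diagonalization of $H$ are then essentially routine.
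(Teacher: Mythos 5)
The paper itself does not prove the index bound in Jordan's theorem: it cites Jordan's original article \cite{jordan} for that, and the only thing it actually proves is the supplementary claim that $A$ can be taken with at most $d$ generators, via simultaneous diagonalization of a finite abelian subgroup of $\GL(d,\CC)$. Your proposal is therefore much more ambitious: you sketch a full, self-contained proof of the index bound along the classical geometric lines (Weyl averaging to land in $U(d)$, the commutator-shrinking estimate, the identity-neighborhood subgroup $H=\langle G\cap V\rangle$, a Haar-measure packing bound on $[G:H]$), and then you reproduce the same diagonalization argument as the paper for the generator count. The unitarization, the commutator inequality $\|[g,h]-I\|\le 2\|g-I\|\,\|h-I\|$, the packing bound (using that $g_i^{-1}g_j\notin V$ forces $\|g_i-g_j\|\ge r$ by unitary invariance of the operator norm), and the final diagonalization are all correct as stated.

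The one genuine gap is exactly the one you flag. Choosing $s\in G\cap V$ of minimal nontrivial norm and observing $[s,t]=1$ for all $t\in G\cap V$ shows only that $s$ is central in $H$; it does not show $H$ is abelian. The phrase ``iterating the same minimization inside the successive derived or central subgroups'' does not work as written: the quotient $H/\langle s\rangle$ (or $H/\Z(H)$) no longer lives in $U(d)$, so the operator-norm estimate that drives the whole argument is lost, and one cannot simply repeat the minimization there. The logarithm/BCH alternative you mention is the right way to close this, but it is a real step, not a remark: one needs a Zassenhaus-type statement that for $r$ small enough (depending on $d$) the group $H$ lies in a connected nilpotent Lie subgroup of $U(d)$, whose closure is a compact connected nilpotent Lie group and hence a torus, so $H$ is abelian; or, equivalently, uniform BCH estimates on $\log(V)\subset\mathfrak u(d)$. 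Alternatively, a purely finite-group argument shows first that $H$ is nilpotent of class bounded in terms of $d$, and then one must still argue that class $1$ is forced. So: correct outline, correct peripheral steps, but the central abelianness claim is asserted rather than proved, as you yourself acknowledge.
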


The claim on the number of generators of $A$ is not usually
part of the statement of Jordan's theorem, but from the perspective of this paper
it is a natural complement to the usual statement.
To prove it, note that if $A$ is a finite abelian subgroup of $\GL(d,\CC)$
then, by simultaneous diagonalization,
$A$ is isomorphic to a subgroup of the group $D$ of
$d\times d$ diagonal matrices whose diagonal entries
are $|A|$-th roots unity; the group $D$ is abelian and
can be generated by at most $d$ elements, so any subgroup of $D$ can be
generated by at most $d$ elements.

In an alternative version of Jordan's theorem, it is
claimed that finite subgroups of $\GL(d,\CC)$ have a
{\it normal} abelian subgroup of index bounded above by a constant
depending only on $d$.
However, since $[G : A] = n$ implies $[G : \cap_{g \in G} \,g^{-1}Ag] \le n!$,
this version
and the one we stated are equivalent, up to using a different bound $C$.
We note that, using the Classification of Finite Simple Groups, M. Collins \cite{collins}
gives, for all $d$, the best possible bound for this alternative
version.

Our aim in this paper is to provide a tool to prove the conclusion of Jordan's
Theorem for certain classes of finite groups.
To put our result in context, we first review some extensions
and analogues of Jordan's theorem.

Jordan's theorem can be generalized taking instead of $\GL(d,\CC)$
any (finite dimensional) Lie group $R$ with a finite number
of connected components: for any such $R$
there exist constants $C(R)$ and $d(R)$ with the property that
any finite subgroup of $R$ has an abelian subgroup of index at most $C(R)$
which can be generated by at most $d(R)$ elements. This follows from
the existence and uniqueness up to conjugation of maximal compact subgroups
(see \cite[Theorem 14.1.3]{HN}),
Peter--Weyl's theorem (see \cite[Chap. III, \S 4, Theorem 4.1]{BD}),
and the above version of Jordan's theorem (see also \cite{BW}).

Similar to Jordan's theorem, a result of Brauer and Feit \cite{BF}
states that if $K$ is a field of characteristic $p>0$ then
any finite subgroup $G$ of $\GL(d,K)$ has an abelian subgroup whose index
is bounded above by a constant depending only on $d$ and on the size of the $p$-Sylow
subgroups of $G$.
Work of M. Collins \cite{collins2} provides an interesting different
modular analogue of Jordan's Theorem.

Remarkably, the conclusion of Jordan's theorem is also known
or expected to be true when $\GL(d,\CC)$ is replaced by some particular
{\it much bigger} groups.
Two notable examples are Serre's conjecture on the Cremona group
(see \cite[\S 6.1]{S2}; some partial results appear in
\cite[Theorem 5.3]{S2}, \cite{PS,S}; see also \cite{Z}), and
Ghys's conjecture on the diffeomorphism group of smooth compact manifolds
(see \cite[Question 13.1]{fisher}\footnote{The statement of Ghys's conjecture as written in \cite{fisher}
does not mention any bound on the number of generators of the abelian subgroup;
the existence of such a bound (once the abelian subgroup has been
proved to exist) follows from \cite{MS}.}
and, for some partial results,
\cite[\S 5]{Zi} and \cite{M1}). In both cases, the {\it big group} on
whose finite subgroups one is interested can be suitably understood as an
infinite dimensional Lie group (see \cite{Dem}
for the case Cremona groups and \cite{Mil} for the case of diffeomorphism groups).
See \cite{popov} for a nice survey on these questions, mostly centered on Serre's conjecture
and some natural extensions of it.

Most of these statements and conjectures share the following pattern:
one is given a set
of finite groups $\mathcal C$, closed under taking subgroups,
and one proves (or wants to prove) that there exist constants $C_0$ and $d$ such that
any $G\in\mathcal C$ has an abelian subgroup of index at most $C_0$ which can be
generated by at most $d$ elements.
It will be useful for us to encode this property in a definition.

\begin{definition*}
Let $\mathcal C$ be a set
of finite groups, and let $C_0$ and $d$ be
positive integers. We say that $\mathcal C$ \emph{satisfies the Jordan property}
$\mathcal J(C_0,d)$ if for every element $G \in \mathcal C$ there exists some
abelian subgroup $A$ of $G$ such that $[G : A] \le C_0$ and $A$ can be
generated by at most $d$ elements.
\end{definition*}

A similar notion has been introduced and studied by
V.L. Popov  \cite{popov1,popov}, according to which a group $G$ is a Jordan
group\footnote{Note that in the literature on permutation groups there
exists a concept called {\it Jordan group}, which was introduced by
W.M. Kantor \cite{kantor} (see also \cite[Chap. 6, \S 6.8]{cameron}) and which is different
from the concept of Jordan group
introduced by Popov. The only thing in common between the two notions seems to be
that they are both inspired on theorems of C. Jordan --- but not on the same one!
(see \cite[Chap. 6, Theorem 6.15]{cameron} for the theorem of Jordan
on which Kantor's terminology is based).}
if there exists a constant $C$ such that any finite subgroup of $G$
has an abelian subgroup of index at most $C$.
In \cite{popov1} a group $G$ is said to have Jordan property if it is a Jordan group
in this sense
(this terminology is also used by Y. Prokhorov and C. Shramov in \cite{PS}).

More generally, we say that a set
of finite groups $\mathcal C$
satisfies the Jordan property if there exist numbers $C_0$ and $d$ such that
$\mathcal C$ satisfies $\mathcal J(C_0,d)$.
The main result of this paper implies that
if a set
of finite groups $\mathcal C$ is closed under taking subgroups then, in order to check whether
$\mathcal C$ satisfies the Jordan property, it suffices to
consider the subset of $\mathcal C$ consisting of groups whose
cardinal has at most two different prime divisors: if this subset has
the Jordan property, then so does $\mathcal C$, although possibly with
different constants. (Our main result is in fact slightly stronger than
this.)

In order to give a precise statement, we introduce the following
notation. If $\mathcal C$ is a set
of finite groups, we denote by
$$\mathcal T(\mathcal C)\subset{\mathcal C}$$
the set of all $T \in \mathcal C$ such that
there exist primes $p$ and $q$, a Sylow $p$-subgroup $P$ of $T$,
and a normal Sylow $q$-subgroup $Q$ of $T$, such that $T = PQ$.
(In particular, $T\in{\mathcal T}({\mathcal C})$ implies
$|T|=p^{\alpha}q^{\beta}$
for some primes $p$ and $q$ and some nonnegative integers $\alpha,\beta$).

Our main result is the following theorem (Theorem \ref{te:main}).

\begin{teo*}
Let $d$ and $M$ be positive integers.
Let $\mathcal C$ be a set of finite groups which is closed under taking subgroups
and such that $\mathcal T(\mathcal C)$ satisfies the Jordan property $\mathcal J(M,d)$.
Then there exists a positive integer $C_0$ such that
$\mathcal C$ satisfies the Jordan property $\mathcal J(C_0,d)$.
\end{teo*}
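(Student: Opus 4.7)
The plan is to combine the hypothesis on $\mathcal T(\mathcal C)$ with the Classification of Finite Simple Groups (CFSG) via a three-stage reduction.

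\emph{Sylow and nilpotent control.} Every $p$-subgroup $P$ of a group in $\mathcal C$ lies in $\mathcal T(\mathcal C)$ (taking the $q$-Sylow part trivial), so $P$ admits an abelian subgroup of index at most $M$ generated by at most $d$ elements. In particular, a non-abelian Sylow $p$-subgroup forces $p \le M$, so only boundedly many primes can contribute non-abelian Sylows to any $G \in \mathcal C$. Moreover, for any two coprime normal Sylow subgroups $O_p(H), O_q(H)$ of a subgroup $H \le G$, the product $O_p(H) \times O_q(H)$ lies in $\mathcal T(\mathcal C)$, which supplies the pairwise constraints I will exploit in the nilpotent analysis.

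\emph{Bounding composition factors via CFSG.} Sections of $G \in \mathcal C$ inherit the bounded-rank abelian-subgroup property of their Sylow subgroups, because quotients of abelian groups are abelian and the index does not grow. Standard CFSG-based arguments on the $p$-ranks of the finite simple groups then bound the order of every nonabelian simple composition factor of $G$ in terms of $M$ and $d$. Furthermore, $k$ distinct copies of a nonabelian simple factor contribute additively to the $p$-rank for some suitable prime $p$, so the number of such composition factors is also bounded; together this gives $[G : R(G)] \le C_s(M,d)$, where $R(G)$ is the solvable radical.

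\emph{Solvable radical.} The Fitting subgroup $F(R(G)) = \prod_p O_p(R(G))$ is nilpotent, with only boundedly many primes contributing non-abelian factors. Combining the abelian subgroups of the non-abelian $O_p(R(G))$ with the abelian $O_p(R(G))$ themselves, and using coprimality of orders to keep the number of generators at $d$, yields an abelian subgroup $A_F \le F(R(G))$ of bounded index generated by at most $d$ elements. Extending $A_F$ to an abelian subgroup of $R(G)$ of bounded index then uses the faithful action of $R(G)/F(R(G))$ on $F(R(G))$, together with Hall decompositions and the $\mathcal T$-hypothesis applied to well-chosen two-prime subgroups. Combining with the bound on $[G : R(G)]$ produces the desired abelian subgroup of $G$.

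The main obstacle, I expect, is the third step: extending an abelian subgroup from the Fitting subgroup to the full solvable radical while simultaneously preserving the bounded index \emph{and} the bound $d$ on the number of generators, since the $\mathcal T$-hypothesis directly controls only two-prime configurations and the action of $R(G)/F(R(G))$ on $F(R(G))$ can be intricate. Handling it will likely require iterating the two-prime control along a suitable chief or Hall filtration of $R(G)$.
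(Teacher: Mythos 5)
Your step (2) contains a genuine gap: bounding the $p$-ranks (or even the full isomorphism types) of the Sylow subgroups of a finite simple group $S$ does \emph{not} bound $|S|$. A concrete counterexample is $\PSL(2,p)$ for $p$ an odd prime: its Sylow $p$-subgroup is cyclic of order $p$, its Sylow $2$-subgroup is dihedral (so has a cyclic subgroup of index $2$), and its Sylow $\ell$-subgroups for every other prime $\ell$ are cyclic. So every Sylow subgroup of $\PSL(2,p)$ has an abelian subgroup of index at most $2$ generated by at most $2$ elements, uniformly in $p$, yet $|\PSL(2,p)|\to\infty$. Thus any argument that constrains only the Sylow subgroups of the simple composition factors cannot succeed; you need constraints coming from solvable subgroups that mix several primes. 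The paper first establishes the Jordan property for the \emph{solvable} members of $\mathcal C$ (Lemmas 3.2--3.4 and Corollary 3.5), bootstrapping the $\mathcal T(\mathcal C)$-hypothesis prime by prime, and only then bounds the simple composition factors by exhibiting inside any large simple group a solvable section that is a Frobenius group with a large complement (Lemmas 2.5--2.7 and 3.6), which is incompatible with the solvable Jordan bound by Lemma 2.7. Your plan inverts this order (bound simple factors first, handle the solvable part second), and that inversion is exactly what makes step (2) unprovable by the means you propose.

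Your step (3) also glosses over a real difficulty that the paper handles carefully: $F(R(G))/\Phi(F(R(G)))$ can have arbitrarily many prime divisors, so $\Aut\bigl(F(R(G))/\Phi(F(R(G)))\bigr)$, and with it the index of the Fitting subgroup, is not bounded merely by a rank bound on the Frattini quotient. The paper's Lemmas 3.2--3.3 first show that the Sylow $p$-subgroups for $p>M$ form an abelian normal Hall subgroup $R$ which becomes central after passing to a bounded-index subgroup $G_2=\C_{G_1}(R)$; only then, in the Schur--Zassenhaus complement $H$ to $R$ inside $G_2$, does $F(H)$ involve only primes at most $M$, making $|F(H)/\Phi(F(H))|$ and hence $[H:F(H)]$ bounded (Lemma 3.4 via Lemma 2.3). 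Any chief or Hall filtration argument you run would need to be preceded by this ``large primes become central'' reduction. Finally, your choice to split along the solvable radical $R(G)$ rather than the paper's $\C_G(E(G))$ is a reasonable alternative decomposition (the paper bounds $[G:\C_G(E(G))]$ via $|E(G)|\le C_5$; you would need the analogous bound on $[G:R(G)]$), but since it is precisely step (2) that is meant to deliver that bound, fixing step (2) is the priority.
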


This theorem is used in \cite{mundet} to prove  Ghys's conjecture
for manifolds without odd cohomology.

Our proof makes use of the Classification of Finite Simple Groups. While
our proof would provide a specific value for the constant $C_0$ in terms of
$M$ and $d$, we make
no attempt to make it explicit or to find the best possible value.

The following corollary implies that the number $C_0$ in our main theorem
can be chosen to depend only on $M$ and $d$, but not on the particular set
${\mathcal C}$. We use the following notation. If $G$ is a finite group, then
${\mathcal S}(G)$ denotes the set of all subgroups of $G$, and we define
${\mathcal T}(G)={\mathcal T}({\mathcal S}(G))$. We also denote by
${\mathcal A}_d(G)$ the set of abelian subgroups of $G$ which can be
generated by at most $d$ elements.

\begin{cor}
\label{cor:C-0-indep}
Given positive integers $d$ and $M$ there exists an integer $C_0$ with the
following property. Let $G$ be any finite group.
Suppose that for any $T\in{\mathcal T}(G)$ there exists some
$A\in{\mathcal A}_d(T)$ satisfying $[T:A]\leq M$.
Then there exists some $B\in{\mathcal A}_d(G)$ satisfying $[G:B]\leq C_0$.
\end{cor}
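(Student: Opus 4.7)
The plan is to deduce the corollary from the main theorem by a standard compactness-style contradiction argument. The key point is that the main theorem produces a constant $C_0$ from $(M,d)$ and a class $\mathcal C$, but if the dependence on $\mathcal C$ were genuine, we could collect counterexamples together into a single class and apply the theorem to it.

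More precisely, suppose the corollary fails for some pair $(d,M)$. Then for every positive integer $n$ there is a finite group $G_n$ such that every $T\in\mathcal T(G_n)$ admits some $A\in\mathcal A_d(T)$ with $[T:A]\le M$, but no $B\in\mathcal A_d(G_n)$ satisfies $[G_n:B]\le n$. I would then set
\[
\mathcal C \;=\; \bigcup_{n\ge 1}\mathcal S(G_n),
\]
which is, by construction, a set of finite groups closed under taking subgroups and containing each $G_n$.

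The next step is to check that $\mathcal T(\mathcal C)$ satisfies $\mathcal J(M,d)$. Any $T\in\mathcal T(\mathcal C)$ is a subgroup of some $G_n$ and is itself of the form $PQ$ with $P$ a Sylow $p$-subgroup and $Q$ a normal Sylow $q$-subgroup; hence $T\in\mathcal T(G_n)$, and the hypothesis on $G_n$ gives an abelian subgroup $A\in\mathcal A_d(T)$ with $[T:A]\le M$. Thus $\mathcal T(\mathcal C)$ satisfies $\mathcal J(M,d)$, and the main theorem (Theorem \ref{te:main}) produces a constant $C_0$ such that $\mathcal C$ satisfies $\mathcal J(C_0,d)$. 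Applying this to $G_n\in\mathcal C$ for any $n>C_0$ contradicts the choice of $G_n$, and the corollary follows.

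There is no real obstacle beyond verifying that $\mathcal T(\mathcal C)$ is controlled by the hypotheses imposed on each $G_n$ individually; the mild point to be careful about is that membership in $\mathcal T(\mathcal C)$ is witnessed group-theoretically by the factorization $T=PQ$, so any $T\in\mathcal T(\mathcal C)$ automatically lies in $\mathcal T(G_n)$ for any $G_n$ containing it, which is exactly what makes the reduction work.
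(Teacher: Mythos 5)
Your proof is correct and follows essentially the same route as the paper: a contradiction argument collecting a putative sequence of counterexamples $G_n$ into a single subgroup-closed class $\mathcal C=\bigcup_n\mathcal S(G_n)$, verifying $\mathcal T(\mathcal C)$ inherits $\mathcal J(M,d)$ from the $\mathcal T(G_n)$, and applying Theorem \ref{te:main}. The one point you flag carefully --- that membership of $T$ in $\mathcal T(\mathcal C)$ is witnessed intrinsically by the factorization $T=PQ$ and hence passes to $\mathcal T(G_n)$ for any $G_n$ containing $T$ --- is indeed the crux, and you handle it correctly.
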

\begin{proof}
Suppose the corollary is false.  Then, there exist some integers $d$ and $M$
such that, for them, there
does not exist any integer $C_0$ with the property specified in the statement.
This implies that there exists a sequence of finite groups $G_1,G_2,G_3,\dots$ such that
for any $i$ and any $T\in {\mathcal T}(G_i)$ there exists some
$A\in {\mathcal A}_d(T)$ satisfying $[T:A]\leq M$ and,
if we define
$$C(G_i):=\inf \{[G_i:B] \mid B\in{\mathcal A}_d(G_i)\},$$
then $C(G_i)\to\infty$.
Let ${\mathcal C}=\bigcup_i{\mathcal S}(G_i)$. Then ${\mathcal T}({\mathcal C})$
satisfies ${\mathcal J}(M,d)$ but ${\mathcal C}$ does not satisfy
${\mathcal J}(C_0,d)$ for any value of $C_0$. This  contradicts our main
theorem, so the corollary is proved.
\end{proof}

One can obtain variations on the theme of our main theorem and the
previous corollary
replacing $\mathcal T(\mathcal C)$ by any subset of $\mathcal C$
containing $\mathcal T(\mathcal C)$.
Two natural choices are the following ones:
$$
\mathcal T_0(\mathcal C) = \{ T_0 \in \mathcal C: \text{ $T_0$ is a $\{p,q\}$-group for some primes $p$ and $q$}\},
$$
$$
\mathcal Sol(\mathcal C) = \{ S \in \mathcal C: \text{ $S$ is solvable}\}.
$$
We have inclusions $\mathcal T(\mathcal C)\subset \mathcal T_0(\mathcal C)$
and $\mathcal T(\mathcal C)\subset \mathcal Sol(\mathcal C)$:
the first one is obvious and the second one is an easy exercise
(in fact, Burnside's $p^\alpha q^\beta$-theorem, see e.g. \cite[Theorem 7.8]{isa},
implies that we also have an inclusion
$\mathcal T_0(\mathcal C)\subset \mathcal Sol(\mathcal C)$).
Hence, combining our main theorem and Corollary \ref{cor:C-0-indep}
we obtain immediately the following.

\begin{cor}
Given positive integers $d$ and $M$ there exists an integer $C_0$
with the following property.
Let $\mathcal C$ be a set of finite groups which is closed under taking subgroups.
If $\mathcal Sol(\mathcal C)$ satisfies the
Jordan property $\mathcal J(M,d)$,
then  $\mathcal C$ satisfies the Jordan property $\mathcal J(C_0,d)$.
\end{cor}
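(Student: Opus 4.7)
The plan is to deduce this corollary directly from Corollary~\ref{cor:C-0-indep}, which already produces a constant $C_0 = C_0(d,M)$ depending only on $d$ and $M$. So given a set $\mathcal{C}$ as in the statement, it will suffice to verify that for every $G \in \mathcal{C}$ and every $T \in \mathcal{T}(G)$ one has some $A \in \mathcal{A}_d(T)$ with $[T : A] \le M$, since this is precisely the hypothesis of Corollary~\ref{cor:C-0-indep}.

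First I would invoke the inclusion $\mathcal{T}(\mathcal{C}) \subseteq \mathcal{Sol}(\mathcal{C})$ already remarked in the discussion preceding the statement. By the definition of $\mathcal{T}$, each $T \in \mathcal{T}(\mathcal{C})$ has order $p^\alpha q^\beta$ for some primes $p,q$ and some nonnegative integers $\alpha,\beta$, and Burnside's $p^\alpha q^\beta$-theorem then guarantees that $T$ is solvable; combined with $\mathcal{T}(\mathcal{C}) \subseteq \mathcal{C}$, this places $T$ in $\mathcal{Sol}(\mathcal{C})$.

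Next I fix $G \in \mathcal{C}$ and $T \in \mathcal{T}(G)$. Because $\mathcal{C}$ is closed under taking subgroups, $T \in \mathcal{C}$; and the condition defining membership in $\mathcal{T}$ (namely, the existence of primes $p,q$, a Sylow $p$-subgroup $P$ of $T$, and a normal Sylow $q$-subgroup $Q$ of $T$ with $T = PQ$) is intrinsic to $T$, so $T \in \mathcal{T}(\mathcal{C})$, and hence by the previous paragraph $T \in \mathcal{Sol}(\mathcal{C})$. The hypothesis that $\mathcal{Sol}(\mathcal{C})$ satisfies $\mathcal{J}(M,d)$ then supplies the required $A \in \mathcal{A}_d(T)$ with $[T:A] \le M$. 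Applying Corollary~\ref{cor:C-0-indep} to $G$ yields some $B \in \mathcal{A}_d(G)$ with $[G:B] \le C_0$, and since $G$ was arbitrary, $\mathcal{C}$ satisfies $\mathcal{J}(C_0,d)$. There is no real obstacle here: all the substantive work is absorbed into the main theorem and Corollary~\ref{cor:C-0-indep}, and the role of Burnside's theorem is just to allow the cleaner hypothesis on $\mathcal{Sol}(\mathcal{C})$ in place of the technical condition on $\mathcal{T}(\mathcal{C})$.
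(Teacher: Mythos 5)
Your proof is correct and takes essentially the same route as the paper, which obtains this corollary by combining the main theorem with Corollary~\ref{cor:C-0-indep} via the inclusion $\mathcal T(\mathcal C)\subseteq\mathcal Sol(\mathcal C)$. The only cosmetic difference is that you invoke Burnside's $p^\alpha q^\beta$-theorem to get solvability of a $T\in\mathcal T(\mathcal C)$, which is valid but more than needed: since $T=PQ$ with $Q$ a normal $q$-group and $T/Q\simeq P$ a $p$-group, $T$ is an extension of a nilpotent group by a nilpotent group, hence solvable without Burnside (the paper reserves Burnside for the stronger inclusion $\mathcal T_0(\mathcal C)\subseteq\mathcal Sol(\mathcal C)$).
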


The same corollary holds true replacing
$\mathcal Sol(\mathcal C)$ by $\mathcal T_0(\mathcal C)$.

A natural question is whether one could strengthen our main theorem
replacing $\mathcal T(\mathcal C)$ by some (in general) smaller subset. Although we can
not answer completely
this question at present, we can at least prove
that it is not possible to replace $\mathcal T(\mathcal C)$
by
$$
\mathcal P(\mathcal C) = \{ P \in \mathcal C: \text{ $P$ is a $p$-group for some prime $p$}\}.
$$
To justify this claim, let us denote by $G_p$ the group
of affine transformations of the affine line over the finite field
$\FF_p$, where $p$ is any prime. (In particular, $|G_p| = p(p-1)$.)
Let
$$
\mathcal L =\{G\mid G\text{ is a subgroup of }G_p\text{ for some prime $p$}\}.
$$
The set $\mathcal L$ does not satisfy the Jordan property
$\mathcal J(C,d)$ for any $C$ and any $d$
(this follows from Lemma \ref{le:frobeniusgroups} below),
and yet all elements of $\mathcal P(\mathcal L)$ are abelian
and cyclic. Indeed, for each $p$ we have an exact sequence
$0\to\FF_p\to G_p \to\FF_p^*\to 1$, where
$\FF_p^*\subset\FF_p$ is the multiplicative group of units
and $\FF_p$ is the additive group; both $\FF_p^*$ and $\FF_p$ are cyclic
and their orders are coprime, so all Sylow subgroups
of $G_p$ are abelian and cyclic.
Hence, $\mathcal P(\mathcal L)$ satisfies the Jordan property
$\mathcal J(1,1)$, but $\mathcal L$ does not satisfy the Jordan property
$\mathcal J(C,d)$ for any $C$ and any $d$.

We close this introduction with a remark on style.
This is a paper on finite groups which, we hope, will also be of interest to
mathematicians whose main expertise is outside finite group theory.
With these readers in mind and to make the paper generally
easier to read, we give detailed
and complete references to more of the results we use than we normally
would if we were writing for an audience of only finite group theorists.
An excellent reference for the basic notions and results on
finite groups which we use is \cite{isa}.

After completing this paper, we were informed by L\'aszl\'o Pyber that
he had independently obtained some results related to the ones in this paper,
but that his results have not yet appeared in print.

\section{Preliminary lemmas} \label{sec:preliminary}

Recall that a \emph{quasisimple} group is a finite group $S$ such that
$S/\Z(S)$ is a non-abelian simple group and $S$ is perfect.
Here a finite group $S$ is perfect if $S' = S$, that is,
$S$ has no nontrivial abelian homomorphic images.
Following standard conventions as, for example, in \cite{isa}, we define
the \emph{layer} $E(G)$ of a finite group
$G$ to be  the product of all the quasisimple subnormal
subgroups of $G$  (the latter are called the \emph{components} of $G$).
For two subgroups $A$ and $B$ of a finite group $G$, we denote
by $[A,B]$ the \emph{commutator subgroup} of $A$ and $B$. We recall
\cite[Lemma 4.3]{isa} that $A$ normalizes $B$ if and only if
$[A,B] \subseteq B$.

\begin{lem} \label{le:nonnormalpsubgroup}
Let $G$ be a finite group,
let $G_1 = \C_G(E(G))$, let $p$ be a prime, and let $P$ be an
abelian Sylow $p$-subgroup
$G_1$.  Assume that $P$ is not a normal subgroup of $G_1$.
Then there exists a prime $q$ and a nontrivial $q$-subgroup $Q$ of $G_1$
such that $p \ne q$ and $[P,Q] = Q$.
(In particular, $P$ normalizes $Q$.)
\end{lem}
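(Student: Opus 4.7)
The plan is to first show that $G_1$ has trivial layer, i.e., $\F^*(G_1) = \F(G_1)$, and then to apply a coprime-action argument to the $p'$-part of $\F(G_1)$ to extract the desired $Q$.

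For the trivial-layer step, I would observe that $\F(G) \subseteq G_1$, since $[\F(G), E(G)] = 1$ is a standard property of the generalized Fitting subgroup. Hence $\F(G)$ is a nilpotent normal subgroup of $G_1$, so $\F(G) \subseteq \F(G_1)$. Using the classical identity $\C_G(\F^*(G)) = \Z(\F^*(G))$, I compute
\[
\C_{G_1}(\F(G_1)) \subseteq \C_{G_1}(\F(G)) = \C_G(E(G)) \cap \C_G(\F(G)) = \C_G(\F^*(G)) = \Z(\F^*(G)),
\]
which is abelian. Since $\C_{G_1}(\F(G_1))$ is normal in $G_1$ and abelian, it is itself a nilpotent normal subgroup of $G_1$ and so is contained in $\F(G_1)$. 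Thus $\C_{G_1}(\F(G_1)) = \Z(\F(G_1))$ and therefore $\F^*(G_1) = \F(G_1)$.

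For the coprime-action step, write $\F(G_1) = \OO_p(G_1) \times F_{p'}$ with $F_{p'} = \prod_{r \ne p} \OO_r(G_1)$. Since $\OO_p(G_1)$ sits inside every Sylow $p$-subgroup, $\OO_p(G_1) \subseteq P$, and as $P$ is abelian it centralizes $\OO_p(G_1)$. If $P$ also centralized $F_{p'}$, then $P \subseteq \C_{G_1}(\F(G_1)) = \Z(\F(G_1)) \subseteq \F(G_1)$; but then $P$ would be the unique Sylow $p$-subgroup of the nilpotent group $\F(G_1)$, giving $P = \OO_p(G_1)$ and making $P$ normal in $G_1$, contrary to hypothesis. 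So there is a prime $q \ne p$ with $[\OO_q(G_1), P] \ne 1$. Setting $Q = [\OO_q(G_1), P]$ gives a nontrivial $q$-subgroup of $G_1$, and since $P$ acts coprimely on the $q$-group $\OO_q(G_1)$, the standard coprime-action identity $[N, A, A] = [N, A]$ yields $[Q, P] = Q$, as required.

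The main obstacle is really the first step: one needs to recognize that the trivial-layer property $\F^*(G_1) = \F(G_1)$ can be extracted from the generalized Fitting subgroup theorem applied to $G$ rather than to $G_1$, via the chain of centralizer identities above. Once this has been accomplished, the remainder is a routine application of standard coprime action.
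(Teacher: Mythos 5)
Your proof is correct, and its overall shape matches the paper's: establish that $G_1$ has trivial layer (equivalently, that $\C_{G_1}(\F(G_1)) \subseteq \F(G_1)$), and then apply a coprime-action commutator identity to some $\OO_q(G_1)$ to produce $Q$. The difference is in how the trivial-layer fact is obtained. The paper argues at the level of components: any quasisimple subnormal subgroup of $G_1$ lies in $E(G)$, hence (since $G_1 = \C_G(E(G))$) in $\Z(E(G))$, which is abelian, a contradiction; having shown $E(G_1) = 1$, the paper then cites the generalized Fitting theorem \emph{for} $G_1$. You instead apply the generalized Fitting theorem in $G$ itself, running the centralizer chain $\C_{G_1}(\F(G_1)) \subseteq \C_{G_1}(\F(G)) = \C_G(E(G)) \cap \C_G(\F(G)) = \C_G(\F^*(G)) = \Z(\F^*(G))$ (using $\F(G) \subseteq \F(G_1)$), and then observing that an abelian normal subgroup of $G_1$ lies in $\F(G_1)$. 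Your route sidesteps the explicit component argument and is a bit more self-contained; the paper's route exposes the structural fact $E(G_1) = 1$ as a named intermediate step. Both rely on exactly the same deep inputs (the $\F^*$-theorem and the identity $[\OO_q(G_1),P,P] = [\OO_q(G_1),P]$ for coprime action), so the choice is essentially a matter of taste.
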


\begin{proof}
Notice that $F(G)$, the Fitting subgroup of $G$, commutes
with $E(G)$ (for example \cite[Theorem 9.7]{isa}) so that
$F(G) \subseteq G_1$, and more precisely $F(G) \subseteq F(G_1)$.
Since $G_1$ is a normal subgroup of
$G$ and $F(G_1)\subseteq G_1$ is characteristic,
we also know that $F(G_1) \subseteq F(G)$, and therefore
we have $F(G) = F(G_1)$. Denote as usual by $\OO_p(G)$ the largest
normal $p$-subgroup of $G$. Notice that $\OO_p(G)$ is the
Sylow $p$-subgroup of $F(G)$, and it is a normal subgroup of $G_1$.
Since $P$ is a Sylow $p$-subgroup of $G_1$, we have $\OO_p(G) \subseteq P$.
In particular, since $P$ is abelian, $P$ centralizes $\OO_p(G) = \OO_p(G_1)$.
Suppose that $P \subseteq F(G)$. Then $P$ is a Sylow $p$-subgroup
of $F(G)$, and this implies that $P = \OO_p(G)$, contradicting
the fact that $P$ is not a normal subgroup of $G_1$.
It follows that
$P \not\subseteq F(G) = F(G_1)$.
We claim that $E(G_1)$ is trivial. Otherwise, $G_1$ contains at least one component.
Any quasisimple subnormal subgroup of $G_1$ is contained in $E(G)$.
Since $G_1$ is also in the centralizer of $E(G)$ in $G$, this implies
that any such subnormal subgroup is in $\Z(E(G))$, which implies
that it is solvable, a contradiction.
Therefore,
$E(G_1) = 1$ as claimed. In particular, the generalized Fitting subgroup
of $G_1$ is $F(G)$.
By for example \cite[Corollary 9.9]{isa}, this implies that there exists some
prime $q$ such that $[P,\OO_q(G)] \ne 1$.
By the remark above, $p \ne q$. Set $Q = [P,\OO_q(G)]$.
It follows by, for example, \cite[Lemma 4.29]{isa} that $Q = [P, Q]$.
The lemma follows.
\end{proof}

\begin{lem} \label{le:propersubgroup}
Let $H = PQ$ be a finite group which is the product of
a $p$-subgroup $P$ and a normal subgroup $Q$,
where $p$ is a prime, and $[P,Q] = Q$.
Let $J$ be any proper subgroup of $H$.
Then $[H : J] \ge p$.
\end{lem}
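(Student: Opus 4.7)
The plan is to argue by contradiction: assume $[H:J]<p$ and derive the contradiction $J=H$.

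First, I would consider the action of $H$ on the coset space $H/J$ by left multiplication, giving a homomorphism $H\to\mathrm{Sym}(H/J)$ whose kernel is the normal core $J_H=\bigcap_{h\in H}hJh^{-1}\subseteq J$. Because $|\mathrm{Sym}(H/J)|=[H:J]!<p!$ has no prime factor $\geq p$, the quotient $H/J_H$ is a $p'$-group, and so the $p$-subgroup $P$ must already lie inside $J_H\subseteq J$.

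Next, using $P\subseteq J$ together with $H=PQ$ and $Q\trianglelefteq H$, a Dedekind-type argument yields $J=P(J\cap Q)$. Set $N:=J\cap Q$. Counting orders with $|PQ|=|P||Q|/|P\cap Q|$ and noting $P\cap Q=P\cap N$ (which holds because $P\subseteq J$) gives $[H:J]=[Q:N]$. Moreover $P$ normalizes $N$: it normalizes $J$ (being inside $J$) and it normalizes $Q$ (which is normal in $H$), hence it normalizes their intersection. Thus $P$ acts by conjugation on the left-coset space $Q/N$, a set of cardinality $[H:J]<p$; the identification with $H/J$ as a $P$-set is given by $qN\leftrightarrow qJ$, under which $x\cdot qJ=xqJ=(xqx^{-1})J$ since $x\in P\subseteq J$.

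The final step, which I view as the heart of the argument, is the standard orbit count for a $p$-group acting on a set of size less than $p$: each orbit has length a power of $p$ bounded by $[H:J]<p$, hence length $1$. Therefore $xqx^{-1}N=qN$ for all $x\in P$ and $q\in Q$, which rearranges to $[P,Q]\subseteq N$. Combined with the hypothesis $[P,Q]=Q$ this gives $Q\subseteq N\subseteq J$, and together with $P\subseteq J$ it yields $H=PQ\subseteq J$, contradicting the properness of $J$. The only place requiring care is the bookkeeping that identifies $H/J$ with $Q/N$ as a $P$-set; once that is in place, the $p$-group action delivers the contradiction immediately.
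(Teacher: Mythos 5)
Your proof is correct, and it opens with the same move as the paper: act on the coset space $H/J$ and observe that, since $[H:J]<p$, the $p$-group $P$ must lie in the kernel of the action. Where the two proofs diverge is the finish. The paper exploits the fact that commutators are preserved under homomorphisms: if $\rho : H \to \mathrm{Sym}(H/J)$ is the coset action, then $\rho(P)=1$ gives $\rho(Q)=\rho([P,Q])=[\rho(P),\rho(Q)]=1$, hence $\rho(H)=\rho(P[P,Q])=1$, which contradicts transitivity on a set of size greater than $1$. This is a one-line conclusion. You instead push $P\subseteq J$, invoke Dedekind's modular law to write $J=P(J\cap Q)$, identify $H/J$ with $Q/N$ as a $P$-set (where $N=J\cap Q$), and then run the usual orbit-size argument for a $p$-group acting on a set of size less than $p$ to conclude the conjugation action is trivial and hence $[P,Q]\subseteq N$. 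This is correct and the bookkeeping you flagged (well-definedness and equivariance of $qN\leftrightarrow qJ$) does go through, but it is a substantially longer route to the same contradiction. The lesson worth extracting from the paper's version is that one never needs to descend to $Q/N$ at all: passing the entire hypothesis $[P,Q]=Q$ through $\rho$ does the work immediately.
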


\begin{proof}
Let $\Omega$ be the set of left cosets of $J$ in $H$.
Then $\left|\Omega\right| > 1$, and the action by left
multiplication provides a group homomorphism
$$
\rho : H \to S_{\Omega}
$$
from $H$ to the symmetric group on $\Omega$. Since the
image of $\rho$ is transitive, this image is not trivial.
Now suppose $\left|\Omega\right| < p$. Then $\rho(P)$ is
trivial since the Sylow $p$-subgroup of $S_{\Omega}$ is
trivial, and it follows from $H=P[P,Q]$ that $\rho(H)$ is trivial. This
is a contradiction. Hence, the lemma holds.
\end{proof}

Recall that the \emph{Frattini}
subgroup $\Phi(G)$ of a finite group $G$ is the intersection of all the maximal
subgroups\footnote{By convention, \emph{maximal subgroup} means a
subgroup which is maximal among the \emph{proper} subgroups.} of $G$.

\begin{lem} \label{le:centralizeroffrattiniquotient}
Let $G$ be a finite group. Suppose that $E(G) = 1$.
Then $F(G)/\Phi(F(G))$ is an abelian group of square-free exponent
and
$$
\C_G(F(G)/\Phi(F(G))) = F(G).
$$
\end{lem}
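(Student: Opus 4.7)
Write $N = F(G)$ and $C = \C_G(N/\Phi(N))$.

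The first assertion follows from the Sylow decomposition of the nilpotent group $N$: we have $N = \prod_p \OO_p(G)$ and $\Phi(N) = \prod_p \Phi(\OO_p(G))$, so $N/\Phi(N) = \prod_p \OO_p(G)/\Phi(\OO_p(G))$, and each factor is elementary abelian of exponent $p$ by the Burnside basis theorem. Hence $N/\Phi(N)$ is abelian with exponent dividing the square-free product of the primes dividing $|N|$.

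For the centralizer equality, the inclusion $N \subseteq C$ is routine: for any nilpotent $N$ one has $N' \subseteq \Phi(N)$ (every maximal subgroup of a nilpotent group is normal of prime index), so $N$ centralizes $N/\Phi(N)$. For the opposite inclusion $C \subseteq N$ I would proceed in two stages. First, the classical consequence of Burnside's basis theorem, that the kernel of $\Aut(P) \to \Aut(P/\Phi(P))$ is a $p$-group when $P$ is a finite $p$-group, implies, after assembling across the Sylow factors of $N$, that $C/\C_C(N)$ embeds into a direct product of $p$-groups and is therefore nilpotent. Combining this with $\C_C(N) = \Z(N) \subseteq N$, which follows from the standard containment $\C_G(F^*(G)) \subseteq F^*(G)$ applied to $F^*(G) = F(G)$ (valid because $E(G) = 1$), we conclude that $C/N$ is nilpotent.

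Second, I would upgrade this to the equality $C = N$. Fix a prime $p$ and let $\widetilde P/N$ be the Sylow $p$-subgroup of the nilpotent quotient $C/N$; it is characteristic in $C/N$, so $\widetilde P$ is normal in $G$. Let $Q$ be a Sylow $p$-subgroup of $\widetilde P$; since $\OO_p(G)$ is a normal $p$-subgroup of $\widetilde P$, one has $\OO_p(G) \subseteq Q$. Now applying the same Burnside-kernel argument to a $p$-element $c$ of $C$ acting on $\OO_q(G)$ for $q \ne p$, the image of $\langle c \rangle$ in $\Aut(\OO_q(G))$ lies simultaneously in a $p$-group (by the order of $c$) and in a $q$-group (the Burnside kernel), hence is trivial. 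Consequently every element of $Q$ centralizes $\prod_{q \ne p} \OO_q(G)$, which forces the decomposition $\widetilde P = Q \times \prod_{q \ne p} \OO_q(G)$. In particular, $Q$ is the unique Sylow $p$-subgroup of $\widetilde P$, hence characteristic in $\widetilde P$ and normal in $G$; since $\OO_p(G)$ is the largest normal $p$-subgroup of $G$, this forces $Q = \OO_p(G)$ and $\widetilde P = N$. Ranging over all primes $p$ then yields $C = N$.

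The main obstacle I anticipate is exactly this second stage: the nilpotence of $C/N$ is not by itself enough to conclude $C = N$, and one really needs the prime-by-prime observation that a $p$-element of $C$ centralizes the $p'$-part of $N$. Without that extra ingredient there is no reason why the Sylow $p$-subgroup $Q$ of $\widetilde P$ should be normal in the ambient group $G$.
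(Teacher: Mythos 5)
Your proof is correct, and it takes a genuinely different route from the paper's, although both rest on the same two ingredients: that a $p'$-group acting trivially on $\OO_p(G)/\Phi(\OO_p(G))$ acts trivially on $\OO_p(G)$ (equivalently, the kernel of $\Aut(\OO_p(G))\to\Aut(\OO_p(G)/\Phi(\OO_p(G)))$ is a $p$-group), and the consequence of $E(G)=1$ that $\C_G(F(G))\subseteq F(G)$. The paper argues by minimal counterexample: writing $C_1=\C_G(F(G)/\Phi(F(G)))$, it picks a subnormal subgroup $C_2$ of $G$ contained in $C_1$ but not in $F(G)$, minimal with those properties, and splits into cases according to whether $C_2F(G)/F(G)$ is abelian (hence of prime order) or a non-abelian simple group, driving each case to a contradiction. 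You instead first establish the intermediate structural fact that $C/F(G)$ is nilpotent (assembling the Burnside kernels prime by prime and combining with $\C_C(F(G))=\Z(F(G))\subseteq F(G)$), and then directly kill each Sylow $p$-subgroup $\widetilde P/F(G)$ of $C/F(G)$: a $p$-element of $C$ centralizes $\prod_{q\ne p}\OO_q(G)$, whence $\widetilde P=Q\times\prod_{q\ne p}\OO_q(G)$ with $Q$ the unique (hence characteristic) Sylow $p$-subgroup of the normal subgroup $\widetilde P$, so $Q\trianglelefteq G$ forces $Q=\OO_p(G)$ and $\widetilde P=F(G)$. You also correctly flagged the subtlety that nilpotence of $C/F(G)$ alone does not finish the job and that the extra prime-by-prime centralization is what makes $Q$ normal in $G$. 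Your approach makes the structure of $C/F(G)$ explicit before collapsing it and avoids the paper's case split; the paper's argument is somewhat shorter and more self-contained, at the price of being a bit less transparent about why it works.
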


\begin{proof}
$F(G)$ is the direct product of all the $\OO_p(G)$ for all primes $p$.
It follows that $\Phi(F(G))$ is the direct product of all the
$\Phi(\OO_p(G))$ for all primes $p$. Therefore
$F(G)/\Phi(F(G))$ is isomorphic to the direct product of
the $\OO_p(G)/\Phi(\OO_p(G))$ for all primes $p$. By, for example, \cite[1D.8]{isa},
we have that $\OO_p(G)/\Phi(\OO_p(G))$ is elementary abelian,
and it follows that $F(G)/\Phi(F(G))$ is abelian of square-free exponent.

Set
$$
C_1 = \C_G(F(G)/\Phi(F(G))).
$$
It is clear that $C_1$ is a normal subgroup of $G$ such that $F(G) \subseteq C_1$.
Let us assume that $F(G) \ne C_1$.
Then the set $\mathcal S$ of subnormal subgroups of $G$ contained in $C_1$ and
not contained in $F(G)$ is nonempty, because $C_1\in\mathcal S$.
Let $C_2$ be minimal among the elements of $\mathcal S$.

Suppose that $C_2F(G)/F(G)$ is not abelian. Then $C_2F(G)/F(G)$ is a non-abelian
simple group. Let $\Gamma=C_2F(G)/F(G)$. We have inclusions
$\Gamma'\subseteq C_2'F(G)/F(G)\subseteq C_2F(G)/F(G)=\Gamma$
and, since $\Gamma'=\Gamma$, we have
$C_2'F(G)/F(G)=C_2F(G)/F(G)$. This implies that
$C_2'\not\subseteq F(G)$, because $C_2\not\subseteq F(G)$. Hence, $C_2'\in{\mathcal S}$, so,
by the minimality of $C_2$, we have that $C_2= C_2'$ and $C_2$ is perfect.
Let $p$ be any prime. Then $C_2$ acts trivially
on $\OO_p(G)/\Phi(\OO_p(G))$. Let $q$ be a prime divisor of $|C_2F(G)/F(G)|$ with $p \ne q$,
and let $Q$ be a Sylow $q$-subgroup of $C_2$. Then by, for example, \cite[3D.4]{isa},
we have that, since $Q$ centralizes $\OO_p(G)/\Phi(\OO_p(G))$, we also have that
$Q$ centralizes $\OO_p(G)$. This implies that $\C_{C_2}(\OO_p(G))$ is a subnormal
subgroup of $G$ contained in $C_2$, and, since $Q \subseteq \C_{C_2}(\OO_p(G))$,
$\C_{C_2}(\OO_p(G))$ is not contained in $F(G)$. Again by the minimality, we obtain
that $C_2= \C_{C_2}(\OO_p(G))$. Since this happens for
all primes $p$, it follows that $C_2$ centralizes $F(G)$.
Since $E(G)=1$, by \cite[Theorem 9.8]{isa} we deduce that $C_2\subseteq F(G)$,
which contradicts the definition of $C_2$.

Hence, $C_2F(G)/F(G)$ is abelian. By the minimality of $C_2$, we
have that $C_2 F(G)/F(G)$ has prime order, and
we set $p=|C_2 F(G)/F(G)|$. Let $P$ be a Sylow $p$-subgroup of $C_2$. Then,
by the argument above, for every prime $q$ with $p \ne q$, we have
that $P$ centralizes $\OO_q(G)$.  We also have that $P \OO_p(G)$
centralizes $\OO_q(G)$. Since $P \OO_p(G)$ is a Sylow $p$-subgroup of
$C_2 F(G)$, it follows that $P \OO_p(G)$ is normalized by a
Sylow $r$-subgroup of $C_2 F(G)$ for every prime $r$, so that
$P \OO_p(G)$ is a normal $p$-subgroup of
$PF(G)$. This implies that $P \OO_p(G)$ is a subnormal $p$-subgroup of
$G$ and this implies, by for example \cite[Theorem 2.2]{isa},
that $P \subseteq F(G)$. This final contradiction completes the
proof of the lemma.
\end{proof}

The following lemmas will be used later (in Lemma \ref{le:quasisimple})
to prove that if a set
of groups ${\mathcal C}$ satisfies the hypothesis of our main theorem,
then the isomorphism classes of groups appearing as components of elements
of  ${\mathcal C}$ form a finite collection. The crucial ingredient is to control
the possible non-abelian simple groups, and for that we will use
the Classification of the Finite Simple Groups \cite{GLS,W}.

The classification provides an infinite list of finite simple
groups such that every finite simple group is isomorphic to a member of
this list. There are unfortunately discrepancies with the
notation for finite simple groups in the literature. In this paper, we  follow the
notation in \cite{GLS} for the list of finite simple groups.
Table I in p. 8 of [op.cit.] also provides a list of alternative
notations for these finite simple groups. While simple groups on this list
can be isomorphic to other groups on the list in some cases, these few isomorphisms
are all listed in \cite[Table II, p. 10]{GLS}.
The table begins with the familiar abelian simple groups of prime order $\ZZ_p$, and
the alternating groups $A_n$. This is followed by 16 families of groups which all depend
on a parameter $q$ (and some also on another parameter $n$):
these are the \emph{finite simple groups of Lie type}.
The table is then followed by the 26 \emph{sporadic simple groups}. These cover,
up to isomorphism, all the finite simple groups except the \emph{Tits group} $^2F_4(2)'$ which appears
in the table in footnote 2.

A description of the finite simple groups of Lie type appears
in \cite{carter}. As we will see below, those finite simple groups of Lie type which depend
on two parameters $q$ and $n$ are isomorphic to \emph{classical groups}.
Of course, many classical groups are not simple groups. We use the notation
of \cite{W} for the classical groups.

The following result refers to finite groups of Lie type and is probably
well known (see for example \cite{Z}; in fact, there are much stronger results
in the literature, e.g. \cite{LNS}). We include a short proof with
references for completeness.

\begin{lem} \label{le:psl2subgroupsofsimplegroups}
Let $S$ be a finite simple group of Lie type, not isomorphic to a Suzuki group
$\ ^2B_2(2^{2n+1})$.
Let $q$ be the parameter corresponding to $S$ according to \cite[Table I, p. 8]{GLS}.
Assume that $q\geq 4$.
Then $S$ contains a subgroup isomorphic to some (possibly trivial) central
extension of $\PSL(2,q)$.
\end{lem}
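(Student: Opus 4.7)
The plan is a case-by-case analysis on the family to which $S$ belongs in the list of \cite[Table~I, p.~8]{GLS}. In each case one exhibits a subgroup of $S$ of the form $\SL(2,q)/Z$ with $Z$ a central subgroup, which is automatically a (possibly trivial) central extension of $\PSL(2,q)$.

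For the untwisted families $A_n(q)$, $B_n(q)$, $C_n(q)$, $D_n(q)$, $G_2(q)$, $F_4(q)$, $E_6(q)$, $E_7(q)$, $E_8(q)$, each simple root of the underlying root system, together with its opposite root subgroup, generates a subgroup of the corresponding universal Chevalley group isomorphic to $\SL(2,q)$ (see \cite{carter}); its image in $S$ is a central quotient, so we are done. For the ``standard'' twisted families ${}^2A_n(q)$, ${}^2D_n(q)$, ${}^3D_4(q)$, ${}^2E_6(q)$, the Steinberg endomorphism defining $S$ is a length-preserving graph automorphism composed with a Frobenius; in all cases either at least one simple root is fixed individually by the graph automorphism, whose associated rank-one Levi subgroup is $\SL(2,q)$, or a fused pair of adjacent simple roots produces a twisted rank-one Levi isomorphic to $\SU(3,q)$. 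In the latter case (which occurs for ${}^2A_{2m}(q)$) one uses that $\SU(3,q)$ contains $\SU(2,q)\cong\SL(2,q)$ as the stabilizer of a non-isotropic vector.

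The remaining cases are the two Ree families ${}^2G_2(q)$ with $q=3^{2n+1}\geq 27$ and ${}^2F_4(q)$ with $q=2^{2n+1}\geq 8$, where all simple roots are fused non-trivially and a uniform root-subgroup argument is unavailable. For ${}^2G_2(q)$ one invokes the classical result of Ward (recorded in Kleidman's analysis of maximal subgroups of the Ree groups) that $\PSL(2,q)$ embeds as a subgroup of ${}^2G_2(q)$; explicitly, it arises inside the centralizer of an involution. For ${}^2F_4(q)$, one uses the embedding $\SU(3,q)\hookrightarrow {}^2F_4(q)$ (a maximal subgroup in Malle's classification) coming from the Tits geometry, and then $\SU(3,q)\supset\SL(2,q)$ as above.

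The main obstacle is precisely these two Ree cases, since they cannot be handled by the generic Chevalley root-subgroup construction and require appealing to the detailed subgroup structure of the Ree groups. The exclusion of the Suzuki groups ${}^2B_2(q)$ from the hypothesis is natural for the same reason: the $B_2$ diagram's non-length-preserving symmetry fuses long and short root subgroups in a way that prevents the construction, and in fact an order count (using $|{}^2B_2(q)|=q^2(q-1)(q^2+1)$ and $(q+1)\nmid(q^2+1)$ for $q>1$) shows that no central extension of $\PSL(2,q)$ can embed in ${}^2B_2(q)$ at all.
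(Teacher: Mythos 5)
Your proof is correct and follows essentially the same strategy as the paper's: a case-by-case run through the Lie-type families, producing a subgroup of the form $\SL(2,q)$ (or $\SU(3,q) \supset \SL(2,q)$, or $\PSL(2,q)$ directly) via root-subgroup or explicit containment arguments, with the Ree groups requiring separate treatment via known containments. The only cosmetic difference is that for the twisted families ${}^3D_4(q)$ and ${}^2E_6(q)$ you argue uniformly with fixed simple roots and Levi subgroups, whereas the paper instead reduces to the untwisted subgroups $G_2(q)$ and $F_4(q)$ respectively; and you add a clean order-count justification for excluding the Suzuki family that parallels the paper's divisibility-by-$3$ remark.
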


\begin{rem} The Suzuki groups $\ ^2B_2(2^{2n+1})$ don't contain subgroups isomorphic
to a central extension of $\PSL(2,2^{2n+1})$ because their order
$|^2B_2(q)|=q^2(q-1)(q^2+1)$ is not divisible by $3$, whereas
the order of $\PSL(2,2^{2n+1})$ is divisible by $3$.
\end{rem}

\begin{proof}
Suppose that $S$ is isomorphic to an untwisted group of Lie type.
By \cite[Theorem 6.3.1]{carter} there exists a homomorphism
from $\SL(2,q)$ to $S$ with nontrivial image (see the formulas for
$x_r(t)$ in \cite[p. 64]{carter}).
Since we assume that $q \ge 4$, the group $\PSL(2,q)$ is
simple, so the image of this homomorphism is isomorphic to either
$\SL(2,q)$ or $\PSL(2,q)$.
For any $n\geq 2$
the simple group $^2A_n(q) \simeq \PSU_{n+1}(q)$
contains a
subgroup isomorphic to a central extension of $\PSU_2(q)$, and by
\cite[\S 2.6.1]{W} there is an isomorphism $\PSU_2(q)\simeq \PSL(2,q)$,
so the result holds for $^2A_n(q)$.
The simple group $^2D_n(q)$ (for $n \ge 2$) contains a subgroup isomorphic to $^2D_2(q)$
\cite[Theorem 14.5.2]{carter} and by \cite[Table II, p. 10]{GLS} $^2D_2(q)$ is
isomorphic to $A_1(q^2)$. Since
$A_1(q^2)\simeq\PSL_2(q^2)$, the result
also holds for $^2D_n(q)$.
We have the following containments:
$^3D_4(q) > G_2(q)$ \cite[\S 4.6.5, Theorem 4.3]{W},
$^2G_2(3^{2m+1}) > \PSL(2,3^{2m+1})$ \cite[\S 4.5.3, Theorem 4.2]{W},
$^2F_4(2^{2m+1}) > \SU_3(2^{2m+1})$ \cite[\S 4.9.3, Theorem 4.5]{W},
and $^2E_6(q) > F_4(q)$ \cite[p. 173]{W}.
Since we have already proved the result for the smaller of these groups
in each of the four cases
(or for its quotient by a central subgroup,
e.g. $^2A_2(2^{2m+1})\simeq \PSU_3(2^{2m+1})$ is such a quotient of
$\SU_3(2^{2m+1})$)
the conclusion of the lemma holds for each of the larger groups.
Hence, the proof of the lemma is now complete.
\end{proof}

Recall that a finite permutation group $G$ on a set $\Omega$ is called a \emph{Frobenius group}
if $G$ is transitive, no element of $G$ except the identity fixes more than one element of $\Omega$,
and some non-identity element of $G$ fixes some element of $\Omega$
\cite[p. 191]{A}. (For example, the group of affine transformations
of the affine line over a finite field is a Frobenius group.) If $\omega \in \Omega$, the
stabilizer $G_{\omega}$ of $\omega$ in $G$ is called a Frobenius \emph{complement}.
Since $G$ acts transitively on $\Omega$, all Frobenius complements are conjugate,
and hence they all have the same order.
More generally, a finite group is called a
\emph{Frobenius group} if it is isomorphic to some permutation group which is a Frobenius group.

\begin{lem} \label{le:frobeniussubgroupsofsimplegroups}
Let $S$ be a finite simple group of Lie type.
Let $q$ be the parameter corresponding to $S$ according to \cite[Table I, p. 8]{GLS}.
Suppose that $q\geq 4$.
Then $S$ contains subgroups $M$ and $N$, such that
$M$ is solvable, $N$ is a normal subgroup of $M$, and $M/N$ is a Frobenius group with
complement of order at least $(q - 1)/2$.
\end{lem}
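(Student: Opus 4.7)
The plan is to construct the pair $(M,N)$ inside a small, explicitly described subgroup of $S$: for non-Suzuki groups I would reduce to $\PSL(2,q)$ via Lemma~\ref{le:psl2subgroupsofsimplegroups} and use its standard Borel subgroup, while for Suzuki groups I would invoke the known Borel structure directly.

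First I would treat the case when $S$ is not isomorphic to a Suzuki group. By Lemma~\ref{le:psl2subgroupsofsimplegroups}, $S$ contains a subgroup $H$ with a central subgroup $Z\le H$ such that $H/Z\cong \PSL(2,q)$. Inside $\PSL(2,q)$, let $\overline B$ be the image of the upper triangular subgroup of $\SL(2,q)$: it decomposes as $U\rtimes \overline T$, where $U$ is the image of the upper unitriangular matrices (elementary abelian of order $q$) and $\overline T$ is the image of the diagonal torus (cyclic of order $(q-1)/\gcd(2,q-1)$). A short matrix computation shows that $\diag(a,a^{-1})$ conjugates the unipotent with off-diagonal entry $b$ to the one with entry $a^2b$; since $a^2=1$ in $\FF_q$ forces $a=\pm 1$, whose image in $\overline T$ is trivial, the action of $\overline T$ on $U\setminus\{1\}$ is free. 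Hence $\overline B$ is a Frobenius group with complement $\overline T$ of order $(q-1)/\gcd(2,q-1)\ge (q-1)/2$. Then I would take $M$ to be the preimage of $\overline B$ in $H$ and $N=Z$: since $N$ is abelian and $M/N\cong \overline B$ is solvable, $M$ is solvable; $N$ is normal in $M$; and $M/N$ is a Frobenius group with complement of the required size.

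For the Suzuki case $S\cong {}^2B_2(q)$ with $q=2^{2n+1}$, the simplicity of $S$ and the hypothesis $q\ge 4$ force $q\ge 8$. A Borel subgroup $M$ of $S$ has order $q^2(q-1)$ and has the structure $U\rtimes T$, where $U$ is a Sylow $2$-subgroup of $S$ (of order $q^2$) and $T$ is cyclic of order $q-1$. The classical structure theory of Suzuki groups (see \cite{W}) gives that $T$ acts on $U\setminus\{1\}$ without fixed points, so $M$ is already a Frobenius group with complement $T$ of order $q-1$. Setting $N=1$ finishes this case.

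The main obstacle is the Suzuki case: unlike in the other families, Lemma~\ref{le:psl2subgroupsofsimplegroups} is unavailable, so one must handle ${}^2B_2(q)$ separately and cite the right structural fact about its Borel being a Frobenius group. Everything else is an elementary computation inside $\SL(2,q)$.
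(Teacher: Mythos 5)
Your proposal is correct and follows essentially the same route as the paper: reduce to a central extension of $\PSL(2,q)$ via Lemma~\ref{le:psl2subgroupsofsimplegroups}, take the Borel subgroup (the paper identifies it as the stabilizer of a point on the projective line and invokes the doubly transitive action to see it is Frobenius, while you verify the fixed-point-free torus action by an explicit matrix computation), and treat the Suzuki groups separately using the Frobenius structure of their Borel. You are in fact a bit more careful than the paper in explicitly taking $M$ to be the preimage of the Borel and $N$ the central kernel, but the underlying argument is identical.
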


\begin{proof}
Suppose first that $S$ is not a Suzuki group. Then, by Lemma \ref{le:psl2subgroupsofsimplegroups},
$S$ has a subgroup which is a central extension of $\PSL(2,q)$.
Now $\PSL(2,q)$ acts doubly transitively on the projective line of $\FF_{q}$,
in such a way that no non-identity element fixes more than two points.
Let $B$ be  the stabilizer of a point on the projective line. Then $B$ is a solvable group of order
$q(q -1)$ if $q$ is even, and $q(q -1)/2$ if $q$ is odd.
It follows immediately from the definition of Frobenius group, that $B$ is a
Frobenius group in its action on the affine line. Since its Frobenius
complement has order either $q - 1$ or $(q - 1)/2$ the lemma holds in this
case.

Now suppose that $S$ is a Suzuki group. By Suzuki's original article
\cite{suzuki}, $S$ has order $q^2(q -1)(q^2 + 1)$,
and acts faithfully and doubly transitively on a set
$\Delta$ in such a way that $|\Delta| = q^2 + 1$ and no non-identity element of $S$ fixes more than two elements
of $\Delta$. Set $B$ to be the stabilizer of a point in $\Delta$. Then $B$ is a Frobenius
group with Frobenius complement of order $q -1$.
The group $S$ can be identified with a subgroup of $\GL_4(2^{2n+1})$ \cite[\S 4.2.1]{W},
and with respect to this identification $B$ corresponds to a subgroup consisting
of lower triangular matrices \cite[\S 4.2.2]{W}. Hence $B$ is solvable,
so the lemma holds for Suzuki groups as well.
\end{proof}

Our next lemma depends on the Classification of the Finite Simple Groups
\cite{GLS, W}.

\begin{lem} \label{le:simplegroupsset}
Let $\Sigma$ be an infinite set of non-isomorphic finite non-abelian simple groups,
and let $K$ be any positive constant. Then there exists some $S_0 \in \Sigma$,
and some subgroups $M$ and $N$ of $S_0$ such that $M$ is solvable,
$N$ is a normal subgroup of $M$, and $M/N$ is Frobenius group with
Frobenius complement of order larger than $K$.
\end{lem}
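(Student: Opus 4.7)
The plan is to invoke the Classification of Finite Simple Groups and partition $\Sigma$ into a handful of well-understood families. Only $27$ non-abelian simple groups (the $26$ sporadic groups together with the Tits group $^2F_4(2)'$) lie outside the alternating and Lie-type families, so the infinite set $\Sigma$ must contain infinitely many alternating groups or infinitely many groups of Lie type. In the second case, pigeonholing over the $16$ Lie-type families further lets us assume that the Lie-type members of $\Sigma$ all belong to a single family.

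Suppose first that $\Sigma$ contains $A_n$ for arbitrarily large $n$. I would choose $n$ large enough that, by Bertrand's postulate, some prime $p$ satisfies $n/2 < p \leq n$ and $(p-1)/2 > K$. The subgroup of $A_n$ consisting of permutations that act on $\{1,\dots,p\}$ as an element of $\mathrm{AGL}(1,p) \cap A_p$ and fix $\{p+1,\dots,n\}$ pointwise is isomorphic to $C_p \rtimes C_{(p-1)/2}$: the translations of $\FF_p$ supply the normal $C_p$, while the squares in $\FF_p^*$ are exactly the multiplications which act as even permutations of $\FF_p$. This is a Frobenius group of the required kind, so taking $M$ to be this subgroup and $N$ trivial completes the case.

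Next, suppose $\Sigma$ contains infinitely many groups from a fixed Lie-type family with unbounded parameter $q$; this is automatic for the exceptional families $E_6$, $E_7$, $E_8$, $F_4$, $G_2$, $^3D_4$, $^2B_2$, $^2F_4$, $^2G_2$, whose rank is fixed. I would pick $S \in \Sigma$ with $q \geq \max(4, 2K+2)$ and invoke Lemma \ref{le:frobeniussubgroupsofsimplegroups} directly: it produces $M$ and $N$ satisfying all requirements, with Frobenius complement of order at least $(q-1)/2 > K$.

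The main obstacle is the remaining case, when $q$ is bounded but the rank is unbounded in one of the classical families $A_n(q)$, $^2A_n(q)$, $B_n(q)$, $C_n(q)$, $D_n(q)$, $^2D_n(q)$. I would handle this by showing that, for $n$ large, each such simple group contains a copy of the alternating group $A_m$ with $m \to \infty$ as $n \to \infty$, and then applying the alternating-group argument inside this copy. Concretely, for $\PSL(n+1,q)$ and $\PSU(n+1,q)$ the permutation matrices on the standard basis (which is orthonormal for the standard Hermitian form in the unitary case) exhibit $A_{n+1}$ inside, since no nontrivial permutation matrix is a scalar and so the image in the projective quotient is faithful. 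For $\PSp(2n,q)$, the embedding $\GL(n,q) \hookrightarrow \Sp(2n,q)$ given by $A \mapsto A \oplus (A^T)^{-1}$ sends a permutation matrix $P$ to $P \oplus P$, which is scalar only when $P = I$, yielding $A_n \hookrightarrow \PSp(2n,q)$. For the orthogonal families, the analogous block-diagonal embedding $\SL(n,q) \hookrightarrow \SO^+(2n,q)$ lands in the derived subgroup $\Omega^+(2n,q)$ because $\SL(n,q)$ is perfect for $n \geq 2$ with $(n,q) \neq (2,2),(2,3)$, and parallel constructions handle $\Pom(2n+1,q)$ and $\Pom^-(2n,q)$; in each case the image in the simple quotient contains a copy of $A_n$ via permutation matrices, reducing us to the alternating-group case already settled.
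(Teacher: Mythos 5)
Your proof is correct and reaches the conclusion by a genuinely different route from the paper. Both arguments use the CFSG and the same trichotomy (alternating / Lie type / sporadic), and both invoke Lemma~\ref{le:frobeniussubgroupsofsimplegroups} to bound $q$, but the resemblance largely stops there. The paper argues by contradiction: it fixes a single prime power $t$ with $t-1>K$, embeds the affine Frobenius group over $\FF_t$ into $A_{t+2}$, and then shows that no $S\in\Sigma$ can have $A_{t+2}$ as a sub\emph{quotient}. To bound the rank of classical groups it relies on the monomial (Weyl-group-normalizing) subgroups of $\GL_{n+1}(q)$, $\SU_{n+1}(q)$, $\Sp_{2n}(q)$, etc., and a somewhat delicate composition-factor argument passing through $\SL\to\PSL$ with solvable kernel. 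You instead work constructively in each case: Bertrand's postulate gives a prime $p$ with $(p-1)/2>K$ inside $A_n$ (the paper avoids Bertrand by choosing a prime power directly), and for classical groups of bounded $q$ and large rank you bypass the monomial-subgroup machinery by embedding $A_m$ itself into the simple group via permutation matrices (and block-diagonal embeddings $A\mapsto A\oplus(A^T)^{-1}$ for the symplectic/orthogonal cases), then applying the alternating case inside. Your use of perfectness to push the orthogonal embedding into $\Omega$ rather than merely $\SO$ is the one step that is stated tersely but it is sound, since a perfect subgroup of any group necessarily lies in the derived subgroup. Overall your argument is a bit more self-contained at the price of verifying several explicit embeddings, whereas the paper trades those verifications for citations to Wilson's book and a more abstract subnormal-series reduction.
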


\begin{proof}
Assume the lemma is false.
Suppose that $\Sigma$ and $K \ge 3$ provide a counterexample.
Let $t$ be a power of a prime satisfying $t-1>K$.
(This number $t$ will be fixed throughout the proof.)
The group $F$ of affine transformations
of the affine line over $\FF_t$ is a solvable
Frobenius subgroup of the group of permutations of
the affine line, so
$F$ is isomorphic to a subgroup of $S_t$. The Frobenius
complements of $F$ have order $|\FF_t^*|=t-1>K$. Since $S_t$ is isomorphic to a subgroup
of the alternating group $A_{t+2}$, in addition, $F$ is isomorphic to some subgroup of
the simple alternating group $A_{t+2}$.  It follows that no element of
$\Sigma$ contains any subgroup isomorphic to $A_{t+2}$.
In particular, if $A_n$ is isomorphic to some element of $\Sigma$
then $n \le t + 1$. It follows that
there are only a finite number of groups in $\Sigma$ which are isomorphic to
alternating groups.

Suppose that $S$ is a finite simple group of Lie type from table
\cite[Table I in p. 8]{GLS} and $S$ is isomorphic to some element of $\Sigma$.
The table characterizes $S$ by one of 16 \emph{types} and numbers $n$ and $q$.
If the number $n$ is not explicitly given by the table, we take it to be
the subindex, so, for example, if $S =\ ^3D_4(q)$ we set $n = 4$.
By Lemma \ref{le:frobeniussubgroupsofsimplegroups}, we have
$(q - 1)/2 \le K$, so that $q \le 2K +1$ is bounded above.
We next show that $n$ is also bounded above.\footnote{Our proof
uses implicitly the Weyl groups of the classical finite groups of Lie type,
but we avoid the explicit use of their theory for the benefit of
a potential reader unfamiliar with it.}
For this, we assume without loss of generality that $n \ge 9$.\footnote{This forces
$S$ to be isomorphic to some classical group.}

Before proving that $n$ is bounded, we pause to prove an auxiliary result.
Suppose that $G$ is a finite group, $G_0$ is a normal subgroup of $G$
such that $G/G_0$ is solvable, and $\phi : G_0 \to S$ is a surjective group
homomorphism whose kernel $\ker(\phi)$ is solvable, where $S$
is isomorphic to an element of $\Sigma$. We claim that $G$ cannot contain
subgroups $N$ and $T$ such that $T \trianglelefteq N$, $T$ is solvable and $N/T \simeq A_{t+2}$.
To prove the claim, assume that such groups $N$ and $T$ exist, and set
$N_0 = N \cap G_0$. Then the composition factors of $NG_0/G_0 \simeq N/N \cap G_0 = N/N_0$
are all solvable, so that $N_0$ contains exactly one non-solvable composition
factor and this factor is isomorphic to $A_{t+2}$. The same condition holds
for the image $\phi(N_0)$, since $\ker(\phi)$ is solvable.
Since $A_{t+1}$ contains a subgroup isomorphic to
a Frobenius group with complement larger than $K$, this contradicts our hypothesis.
Therefore the claim is proved.

Now suppose $S = A_n(q) \simeq \PSL_{n+1}(q)$. Then, by \cite[\S 3.3.3]{W}, there exist
groups $G = \GL_{n+1}(q)$, $G_0=\SL_{n+1}(q)$ and a homomorphism $\phi$ as in the previous paragraph.
Furthermore there exist subgroups $N$ and $T$ of $G$ such that $T \trianglelefteq N$, $T$ is solvable
and $N/T \simeq S_{n+1}$. If $n + 1 \ge t+2$ then this contradicts what we know from the
previous paragraph so that $n \le t$ in this case.
Suppose $S =\ ^2A_n(q) \simeq \PSU_{n+1}(q)$. Then, by \cite[\S 3.6.2]{W}, there exist
the groups $G = G_0 = \SU_{n+1}(q)$ and a homomorphism $\phi$ as in the previous paragraph.
Furthermore there exist subgroups $N$ and $T$ of $G$ such that $T \trianglelefteq N$, $T$ is solvable
and $N/T$ is isomorphic to the wreath product
$Z_2 \wr S_m$, where $m$ is the integer part of $(n+1)/2$.
If $m \ge t+2$ this contradicts what we know from the
previous paragraph, so that $n \le 2t+3$ in this case.
Suppose $S =B_n(q) \simeq \Omega_{2n+1}(q)$. Then, by \cite[\S 3.7.4]{W}, there exist
the groups $G = G_0 = \Omega_{2n+1}(q)$ and a homomorphism $\phi$ as in the previous paragraph.
Furthermore there exist subgroups $N$ and $T$ of $G$ such that $T \trianglelefteq N$, $T$ is solvable
and $N/T \simeq Z_2 \wr S_n$.
If $n \ge t+2$ this contradicts what we know from the
previous paragraph, so that $n \le t+1$ in this case.
Essentially the same argument shows that if
$S =D_n(q) \simeq \Pom_{2n}^+(q)$ then $n \le t+1$, and that if
$S =\ ^2D_n(q) \simeq \Pom_{2n}^-(q)$ then $n \le t+1$.
Suppose $S =C_n(q) \simeq \PSp_{2n}(q)$. Then, by \cite[\S 3.5.3]{W}, there exist
the groups $G = G_0 = \Sp_{2n}(q)$ and a homomorphism $\phi$ as in the previous paragraph.
Furthermore there exist subgroups $N$ and $T$ of $G$ such that $T \trianglelefteq N$, $T$ is solvable
and $N/T \simeq Z_2 \wr S_n$.
If $n \ge t+2$ this contradicts what we know from the
previous paragraph, so that $n \le t+1$ in this case.
Notice that, since $n \ge 9$, $S$ can not be any other finite simple
group of Lie type.
Hence, in all cases $n$ is bounded above, and there exist only a finite
number of simple groups of Lie type on the list which can be isomorphic to
groups in $\Sigma$.

Aside from alternating groups and groups of Lie type, \cite[Table II, p. 10]{GLS}
contains only the 26 sporadic simple groups and the Tits group $^2F_4(2)'$, and
the abelian simple groups.
Since the elements of $\Sigma$ are all non isomorphic to each other, they
are non-abelian, and they can only be isomorphic to a finite number of
alternating groups, and a finite number of finite simple groups of Lie type, we
conclude that $\Sigma$ is finite.
This contradicts our assumption,
and completes the proof of the lemma.
\end{proof}

\begin{lem} \label{le:frobeniusgroups}
Let $G$ be a Frobenius group of permutations of a finite set $\Omega$,
and let $G_{\omega}$ be one of its Frobenius complements.
Let $A$ be an abelian subgroup of $F$. Then
$[F : A] \ge |G_{\omega}|$.
\end{lem}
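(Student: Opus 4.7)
Write $H = G_{\omega}$. I plan to invoke Frobenius' theorem to decompose $G$ as $K \rtimes H$, where $K$ is the Frobenius kernel, then establish a dichotomy for abelian subgroups of $G$, and finally conclude by a short index computation. Recall that by Frobenius' theorem, $K := \{1\} \cup \{g \in G : g \text{ fixes no point of } \Omega\}$ is a normal subgroup of $G$ and $G = K \rtimes H$, so in particular $|G| = |K| \cdot |H|$ and $[G:K] = |H|$.

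The central step is the following dichotomy: every abelian subgroup $A$ of $G$ is contained either in $K$, or in some conjugate of $H$. To prove this, suppose $A \not\subseteq K$, and pick a non-identity $a \in A$ fixing some point $\omega' \in \Omega$. For any $b \in C_G(a)$ we compute $a(b\omega') = (ab)\omega' = (ba)\omega' = b\omega'$, so $b\omega'$ is a fixed point of $a$; by the defining Frobenius property, $a$ fixes only $\omega'$, hence $b\omega' = \omega'$, i.e., $b \in G_{\omega'}$. Thus $C_G(a) \subseteq G_{\omega'}$, and since $A$ is abelian, $A \subseteq C_G(a) \subseteq G_{\omega'}$, which is a conjugate of $H$.

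With the dichotomy in hand, the case analysis is immediate. If $A \subseteq K$, then $[G:A] \ge [G:K] = |H|$. If $A \subseteq H^g$ for some $g \in G$, then $|A| \le |H|$, so $[G:A] \ge |G|/|H| = |K|$; and since $H$ acts by conjugation fixed-point-freely on $K \setminus \{1\}$ (another direct consequence of the Frobenius hypothesis), $|H|$ divides $|K|-1$, yielding $|K| \ge |H|+1 > |H|$. In either case $[G:A] \ge |H| = |G_{\omega}|$, as desired. I expect the only subtle step is the centralizer inclusion $C_G(a) \subseteq G_{\omega'}$, where the defining property of a Frobenius permutation group is actually used; everything else is a routine index count once Frobenius' theorem is applied.
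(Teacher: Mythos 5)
Your proof is correct and takes essentially the same route as the paper's: Frobenius' theorem for the kernel, the dichotomy that an abelian subgroup lies either in the kernel or in a point stabilizer (established by exactly the same centralizer computation $\C_G(a)\subseteq G_{\omega'}$), and a final index count, with the paper doing that count directly from $|G|=(|G_{\omega}|-1)|\Omega|+|N|$ instead of via the divisibility $|H|\mid |K|-1$. The only tacit point is that $|K|\ge |H|+1$ requires $|K|>1$, which is immediate since $|K|=[G:H]=|\Omega|\ge 2$ by transitivity.
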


\begin{proof}
It follows directly from the definition of a Frobenius group that
$|\Omega| > 1$. Let $N^*=G\setminus\bigcup_{\omega\in\Omega}G_{\omega}$ and $N=\{1\}\cup N^*$.
A theorem of Frobenius \cite[(35.24)]{A} states that $N$ is a normal subgroup of
$G$. Fix some $\omega\in\Omega$. Since all
subgroups $\{G_{\omega'}\mid\omega'\in\Omega\}$ are conjugate, we have
$|G|=(|G_{\omega}|-1)|\Omega|+|N|$. Since $N$ acts freely on $\Omega$,
the map $N\ni\gamma\mapsto \gamma(\omega)\in\Omega$ is injective, so $|N|\leq|\Omega|$.
Applying the same argument to the action of $G_{\omega}$ on $\Omega\setminus\{\omega\}$
we get $|G_{\omega}|\leq|\Omega|-1$. We thus have
$$[G:G_{\omega}]=\frac{(|G_{\omega}|-1)|\Omega|+|N|}{|G_{\omega}|} \geq
\frac{(|G_{\omega}|-1)|\Omega|}{|G_{\omega}|}>|G_{\omega}|-1.$$
Since $[G:G_{\omega}]$ is an integer, this gives $[G:G_{\omega}]\geq |G_{\omega}|$.
Similarly
$$[G:N]=\frac{(|G_{\omega}|-1)|\Omega|+|N|}{|N|}=
\frac{(|G_{\omega}|-1)|\Omega|}{|N|}+1\geq |G_{\omega}|.$$
If $g\in G_{\omega}$ and $h\in G$ are nontrivial commuting elements, we have
$h\in G_{\omega}$, because the action of $g$ on $\Omega$ only fixes $\omega$.
Hence, if $A\subseteq G$ is abelian, then either $A\subseteq G_{\omega}$ for some
$\omega\in\Omega$ or $A\subseteq N$, so the lemma follows from the previous
bounds on $[G:G_{\omega}]$ and $[G:N]$.
\end{proof}

\section{Results} \label{sec:results}

For convenience, we now name the hypotheses which we will use throughout the rest of the
paper. (Recall from the introduction that if ${\mathcal C}$ is a set of finite groups
then $\mathcal T(\mathcal C)$ denotes the set of all $T \in \mathcal C$ such that
there exist primes $p$ and $q$, a Sylow $p$-subgroup $P$ of $T$,
and a normal Sylow $q$-subgroup $Q$ of $T$, such that $T = PQ$.)

\begin{hypA}
$M$ and $d$ are positive integers, $\mathcal C$ is a set of finite groups which is closed under taking subgroups,
and $\mathcal T = \mathcal T(\mathcal C)$
satisfies the Jordan property $\mathcal J(M,d)$.
\end{hypA}

\begin{lem} \label{le:largeprimes}
Assume Hypotheses A. Let $p$ be
any prime larger than $M$. Let $G \in \mathcal C$,
let $G_1 = \C_G(E(G))$, and let $P$ be a Sylow $p$-subgroup
$G_1$. Then $P$ is an abelian normal subgroup of $G_1$ and
$P$ can be generated by at most $d$ elements.
\end{lem}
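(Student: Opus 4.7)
The plan is to split the statement into two claims: (i) $P$ is abelian and $d$-generated, and (ii) $P$ is normal in $G_1$. Each claim reduces to applying the Jordan hypothesis $\mathcal J(M,d)$ to a suitable element of $\mathcal T(\mathcal C)$, combined with the fact that $p > M$.

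For (i), I would note that since $\mathcal C$ is closed under subgroups, $P \in \mathcal C$, and since $P$ is a $p$-group it lies in $\mathcal T(\mathcal C)$ (take the normal Sylow $q$-subgroup to be trivial for any prime $q \ne p$). Hypothesis A then provides an abelian subgroup $A \subseteq P$ with $A$ generated by at most $d$ elements and $[P:A] \le M$. But $[P:A]$ is a power of $p$ and $p > M$, so $[P:A] = 1$, i.e.\ $P = A$.

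For (ii), I would argue by contradiction: assume $P$ is not normal in $G_1$. Since $P$ is abelian by (i), Lemma~\ref{le:nonnormalpsubgroup} applies and yields a prime $q \ne p$ and a nontrivial $q$-subgroup $Q \subseteq G_1$ satisfying $[P,Q] = Q$ (with $P$ normalizing $Q$). Then $H := PQ$ is a subgroup of $G$; since $Q$ is a normal $q$-subgroup of $H$ of order coprime to $|P|$, it is the (unique, normal) Sylow $q$-subgroup of $H$, and $P$ is the Sylow $p$-subgroup of $H$, so $H \in \mathcal T(\mathcal C)$.

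Applying $\mathcal J(M,d)$ to $H$ gives an abelian subgroup $B \subseteq H$ with $[H:B] \le M$. The group $H$ is non-abelian because $[P,Q] = Q \ne 1$, so $B$ is a proper subgroup of $H$. Lemma~\ref{le:propersubgroup} then forces $[H:B] \ge p > M$, a contradiction. The main thing to keep in mind is simply that the inequality $p > M$ has to be played against the $p$-power structure in (i) and against the conclusion of Lemma~\ref{le:propersubgroup} in (ii); there is no serious obstacle beyond lining up the hypotheses of the preliminary lemmas correctly.
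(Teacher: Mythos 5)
Your proof is correct and takes essentially the same route as the paper: both apply the hypothesis $\mathcal J(M,d)$ first to $P$ itself to get abelianness and $d$-generation (using $p>M$ to kill the index), then use Lemma~\ref{le:nonnormalpsubgroup} to produce $Q$ with $[P,Q]=Q$ and Lemma~\ref{le:propersubgroup} applied to $PQ$ to derive the contradiction $p\le M$. The only stylistic difference is that the paper runs a single contradiction argument against the whole conclusion, whereas you separate (i) and (ii); the content is identical.
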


\begin{proof}
Assume this is not the case, and that $P$ is a Sylow $p$-subgroup
which contradicts the statement of the lemma.
Since $P \in \mathcal T$, by hypotheses there exists an abelian
subgroup of $P$ of index at most $M < p$ and such that the subgroup
can be generated by at most $d$ elements, so that $P$ is
abelian and $P$ may be generated by at most $d$ elements.
Since we assume that $P$ is not simultaneously abelian and normal in $G_1$,
we deduce that $P$ is not a normal subgroup of $G_1$.
By Lemma \ref{le:nonnormalpsubgroup}, there exist a prime $q$ and
a nontrivial $q$-subgroup $Q$ of $G_1$ such that $q \ne p$ and $[P,Q] = Q$.
Furthermore, $P$ normalizes $Q$, so $PQ\subseteq G_1$ is a subgroup.
Since $PQ \in \mathcal T$, there exists some abelian subgroup
$J$ of $PQ$ such that $[PQ: J] \le M$.
Since $PQ$ is not abelian, $J$ is a proper subgroup of $PQ$.
By Lemma \ref{le:propersubgroup}, $[PQ : J] \ge p$.
This contradicts the fact that $p > M$. This contradiction
completes the proof of the lemma.
\end{proof}

\begin{lem} \label{le:centralizeroflargeprimes}
Assume Hypotheses A. Then there exists a positive integer $C_1$
with the following property.
Let $G \in \mathcal C$.
Set $G_1 = \C_G(E(G))$, and let $R$ be the product of
all the Sylow $p$-subgroups of $G_1$ for all primes
$p > M$. Let $G_2 = \C_{G_1}(R)$. Then $[G_1 : G_2] \le C_1$.
\end{lem}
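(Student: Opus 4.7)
The plan is to bound $[G_1:G_2]$ by first showing that only primes at most $M$ divide this index, and then, for each such prime $q$, controlling the $q$-part of $[G_1:G_2]$ by a function of $M$ and $d$.

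First, by Lemma \ref{le:largeprimes}, for each prime $p>M$ the Sylow $p$-subgroup $P_p$ of $G_1$ is normal in $G_1$, abelian, and can be generated by $d$ elements. Since the $P_p$ for distinct primes $p>M$ are normal subgroups of coprime orders, they commute pairwise, so $R=\prod_{p>M}P_p$ is an abelian normal subgroup of $G_1$. Consequently $R\subseteq\C_{G_1}(R)=G_2$, and since $R$ contains the full Sylow $p$-subgroup of $G_1$ for every $p>M$, no prime exceeding $M$ divides $[G_1:R]$, and hence none divides $[G_1:G_2]$.

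Next, fix a prime $q\le M$ and a Sylow $q$-subgroup $Q$ of $G_1$; it suffices to bound $[Q:\C_Q(R)]$, the $q$-part of $[G_1:G_2]$. Since $Q\in\mathcal T$, Hypotheses A give an abelian subgroup $A\subseteq Q$, generated by $d$ elements, with $[Q:A]\le M$. For every prime $p>M$ with $P_p\ne 1$, the subgroup $QP_p$ belongs to $\mathcal T$, so Hypotheses A yield an abelian subgroup $J\subseteq QP_p$, generated by $d$ elements, with $[QP_p:J]\le M$. Setting $\tilde J=Q\cap JP_p$, any element of $\tilde J$ has the form $jp'$ with $j\in J$, $p'\in P_p$; since $P_p$ is abelian and $J$ centralizes $J\cap P_p$, the conjugation action of $\tilde J$ on $P_p$ coincides with that of $J$ and centralizes $J\cap P_p$. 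As $[P_p:J\cap P_p]\le[QP_p:J]\le M<p$, the image of $J\cap P_p$ in $V_p=P_p/\Phi(P_p)$ is an $\FF_p$-subspace of index $<p$, hence all of $V_p$; thus $\tilde J$ acts trivially on $V_p$, and the standard coprime-action principle (as invoked in the proof of Lemma \ref{le:centralizeroffrattiniquotient}) gives $\tilde J\subseteq\C_Q(P_p)$. Since $[Q:\tilde J]\le M$, this yields $[Q:\C_Q(P_p)]\le M$, and consequently $[A:\C_A(P_p)]\le M$.

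To finish, observe that $\C_A(R)=\bigcap_{p>M}\C_A(P_p)$, so the diagonal map $A/\C_A(R)\hookrightarrow\prod_{p>M}A/\C_A(P_p)$ is injective, and each factor is an abelian $q$-group of order at most $M$. Because all factors are $q$-groups, the order of any element of the product equals the maximum of the orders of its components, and so is still at most $M$; therefore $A/\C_A(R)$ is an abelian, $d$-generated $q$-group of exponent at most $M$, which forces $|A/\C_A(R)|\le M^d$. Combining the estimates,
\begin{equation*}
[Q:\C_Q(R)]\le[Q:\C_A(R)]=[Q:A]\,[A:\C_A(R)]\le M^{d+1},
\end{equation*}
and taking the product over primes $q\le M$ shows $[G_1:G_2]\le M^{(d+1)N}$, where $N$ is the number of primes not exceeding $M$; this quantity may be taken as $C_1$.

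The main obstacle is the last step: a naive bound $[A:\C_A(R)]\le\prod_{p>M}[A:\C_A(P_p)]$ can be unboundedly large because $G_1$ may involve arbitrarily many primes above $M$. The decisive input that avoids this is that $A$ is a $q$-group, which forces all orders appearing in the product of $q$-groups to be $q$-powers, so their lcm collapses to the maximum and the exponent stays bounded; together with the $d$-generation of $A$, this controls $|A/\C_A(R)|$ uniformly, independently of how many primes actually occur in $R$.
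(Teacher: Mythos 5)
Your proof is correct and follows essentially the same strategy as the paper: for each prime $q\le M$ you extract a $d$-generated abelian subgroup of index at most $M$ in the Sylow $q$-subgroup $Q$ of $G_1$, show its centralizer of each $P_p$ has index at most $M$, and then use the fact that the quotient by the centralizer of $R$ is a $d$-generated abelian $q$-group of exponent at most $M$ (hence order at most $M^d$) to bound the $q$-part of $[G_1:G_2]$ by $M^{d+1}$ and multiply. The only noteworthy inefficiency is the detour through $V_p = P_p/\Phi(P_p)$ and coprime action: since $[P_p:J\cap P_p]$ divides $[QP_p:J]\le M<p$ and is a power of $p$, it must equal $1$, so $J\supseteq P_p$ outright; then $J$ abelian immediately gives $\tilde J = Q\cap J\subseteq\C_Q(P_p)$ with $[Q:\tilde J]\le M$, which is exactly how the paper argues.
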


\begin{proof}
Set $C_1= M^{(d + 1)M} $ so that $C_1\geq 1$. If $G_1 = G_2$ the results holds,
so we assume $G_1 \ne G_2$. In particular, $R$ is not trivial.
Let $q$ be any prime dividing $[G_1 : G_2]$, and let $Q$ be
a Sylow $q$-subgroup of $G_1$. By Lemma \ref{le:largeprimes},
$R$ is an abelian\footnote{$R$ is abelian because of the following
 general fact \cite[Theorem 1.26]{isa}:
 if $\Gamma$ is a finite group all of whose Sylow subgroups
 are normal then $\Gamma$ is the direct product of its Sylow subgroups.}
Hall subgroup of $G_1$, and it follows
that $q \le M$.
Fix $p_0$ to be some prime divisor of $|R|$, and
let $P_0$ be the Sylow $p_0$-subgroup of $R$.  Since
$QP_0 \in \mathcal T$, by hypotheses, there exists an
abelian subgroup $B$ of $QP_0$ such that $[QP_0 : B] \le M$,
and $B$ can be generated by at most $d$ elements.
Since $p_0 > M$, it follows that $B$ contains a Sylow
$p_0$-subgroup of $QP_0$, and therefore $B \supseteq P_0$.
We set $B_0 = Q \cap B$. Then
$[QP_0 : B] = [Q : B_0] \le M$.
Notice that $B_0$ is an abelian $q$-group generated
by at most $d$ elements, and it acts trivially on $P_0$.
Let $p$ be any prime divisor of $|R|$, and let $P$
be the Sylow $p$-subgroup of $R$. A similar argument
shows that $[B_0 : \C_{B_0}(P)] \le M$. Let
$B_1 = \C_{B_0}(R)$. Then
$B_1$ is the intersection of the $\C_{B_0}(P)$ as
we run through all the prime divisors $p$ of $|R|$.
Then $B_0/B_1$ is an abelian $q$-group of exponent at most
$M$ generated by at most $d$ elements, so that
$|B_0/B_1| \le M^d$. It follows that
$[Q : B_1] \le M^{d +1}$. Therefore,
the $q$-part of $[G_1 : G_2]$ is at most
$M^{d + 1}$. Therefore
$$
[G_1 : G_2] \le M^{(d + 1)M} = C_1,
$$
as desired.
\end{proof}

\begin{lem} \label{le:solvableinequality}
Assume Hypotheses A. Then there exists a positive constant $C_2$
such that whenever $G \in \mathcal C$ and $G_1 = \C_G(E(G))$,
then there exists an abelian subgroup
$A$ of $G_1$ such that $[G_1 : A] \le C_2$ and such that $A$ can be
generated by at most $d$ elements.
\end{lem}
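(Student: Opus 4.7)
The plan is to apply Lemma~\ref{le:centralizeroflargeprimes} to pass to the normal subgroup $G_2:=\C_{G_1}(R)\trianglelefteq G_1$ of index at most $C_1$, where $R$ is the product over primes $p>M$ of the Sylow $p$-subgroups of $G_1$, and then construct the desired abelian subgroup inside $G_2$ via a generalized Fitting analysis. By Lemma~\ref{le:largeprimes} each such Sylow is abelian, normal and $d$-generated, so $R$ is abelian and central in $G_2$; combining the $d$-generated Sylow subgroups (of pairwise coprime order) via the Chinese Remainder Theorem makes $R$ itself generated by at most $d$ elements.

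Next I would verify $E(G_2)=1$: any component $K$ of $G_2$ is quasisimple and subnormal in $G_2\triangleleft G_1$, hence subnormal in $G_1$, so it would be a component of $G_1$, contradicting $E(G_1)=1$ (established in the proof of Lemma~\ref{le:nonnormalpsubgroup}). Lemma~\ref{le:centralizeroffrattiniquotient} then gives $\C_{G_2}(V)=F(G_2)$ for $V:=F(G_2)/\Phi(F(G_2))=\prod_p V_p$, yielding an injection
$$
G_2/F(G_2)\hookrightarrow\Aut(V)=\prod_p\GL(V_p).
$$

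The crucial step is bounding $[G_2:F(G_2)]$ in terms of $M$ and $d$. For every prime $p$, the $p$-group $\OO_p(G_2)$ lies in $\mathcal T$ (take $Q=1$), so Hypotheses~A produces an abelian subgroup of index at most $M$ generated by at most $d$ elements; consequently $\OO_p(G_2)$ itself is generated by at most $d+\log_2 M$ elements and so $\dim_{\FF_p}V_p\le d+\log_2 M$. For every $p>M$ one has $\OO_p(G_2)=\OO_p(G_1)\subseteq R$, so $G_2$ acts trivially on $V_p$ and the above embedding factors through $\prod_{p\le M}\GL(V_p)$; with at most $M-1$ primes, each $V_p$ of dimension at most $d+\log_2 M$ over a field of size at most $M$, this product has order bounded by a constant $C_{2,1}(M,d)$.

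For the abelian subgroup itself, apply Hypotheses~A again for each prime $p$ to choose $A_p\subseteq\OO_p(G_2)$ abelian, $d$-generated, with $[\OO_p(G_2):A_p]\le M$ (for $p>M$ take $A_p=\OO_p(G_2)=\OO_p(G_1)$). Then $A:=\prod_p A_p\subseteq F(G_2)$ is abelian, $d$-generated by CRT across its distinct prime parts, and satisfies $[F(G_2):A]\le M^{M-1}$. Combining, $[G_1:A]\le C_1\cdot C_{2,1}(M,d)\cdot M^{M-1}=:C_2$. The main obstacle is the previous paragraph: one must establish both the structural fact $E(G_2)=1$ (needed to invoke Lemma~\ref{le:centralizeroffrattiniquotient}) and the dimensional bound on each $V_p$ for $p\le M$; once these are in hand, assembly is straightforward.
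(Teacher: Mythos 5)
Your proof is correct, but it departs from the paper's in a few interesting ways. The paper invokes the Schur--Zassenhaus theorem to split $G_2 = H \times R$ with $H$ a Hall subgroup for the primes at most $M$, proves $E(H) = 1$ by observing that $E(H) \subseteq E(G) \cap \C_G(E(G)) = \Z(E(G))$ is abelian, and then bounds $[H:F(H)]$ by the crude estimate $|\Aut(V)| \le |V|!$ applied to $V = F(H)/\Phi(F(H))$ after first bounding $|V|$. You instead keep $G_2$ whole: you show $E(G_2)=1$ by noting its components would be components of $G_1$ while $E(G_1)=1$; you exploit the product decomposition $\Aut(V) \simeq \prod_p \GL(V_p)$; and you handle the large primes not by splitting them off as a direct factor but by observing that $G_2 = \C_{G_1}(R)$ acts trivially on $V_p$ for $p > M$, so the embedding lands in $\prod_{p\le M}\GL(V_p)$. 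Both routes ultimately rest on Lemma~\ref{le:centralizeroffrattiniquotient} to control $G_2/F(G_2)$ (respectively $H/F(H)$) by the automorphism group of a Frattini quotient, and both use Hypotheses~A on the individual $\OO_p$'s to bound dimensions or generator counts; the ingredients are the same but reassembled differently. Your version is somewhat leaner (no Schur--Zassenhaus, and a tighter $\Aut$ bound), while the paper's version produces a bound written explicitly in elementary functions of $M$ and $d$, which is all the authors care about since they make no attempt to optimize. One small point worth spelling out in a final write-up: the bound $\dim_{\FF_p} V_p \le d + \log_2 M$ follows because the image $\bar A_p$ of an abelian $d$-generated subgroup $A_p$ of index at most $M$ in the Frattini quotient $\OO_p(G_2)/\Phi(\OO_p(G_2))$ has order at most $p^d$ and index at most $M$, so the whole quotient has order at most $M p^d$; this is correct but deserves a sentence.
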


\begin{proof}
Assume the notation of Lemma \ref{le:centralizeroflargeprimes}.
Set
$$C_2 = \left(\left(M!\right)^{M^M + d}\right)!\, M^M\, C_1.
$$
Let $G \in \mathcal C$.
Set $G_1 = \C_G(E(G))$, and let $R$ be the product of
all the Sylow $p$-subgroups of $G_1$ for all primes
$p > M$. Let $G_2 = \C_{G_1}(R)$. By Lemma \ref{le:largeprimes},
we know that $R$ is an abelian normal Hall subgroup of $G_2$
and $R$ can be generated by at most $d$ elements.
By the Schur-Zassenhauss Theorem, there exists a complement
$H$ to $R$ in $G_2$.
Then $H$ is a Hall
subgroup of $G_2$ for the set of primes smaller than or equal to $M$.
Since $H\subseteq C_{G_1}(R)$, the elements of $R$ commute with those
of $H$, and hence $G_2=H\times R$. This implies that $H$ is the only
Hall subgroup of $G_2$ for the set of primes not bigger than $M$. Hence, $H$
is characteristic in $G_2$, so $H$ is normal in $G$ (because $G_2\trianglelefteq G$: indeed, since by Lemma \ref{le:largeprimes}
$R\subseteq G_1$ is a normal Hall subgroup, $R$ is characteristic in $G_1$, so
$G_1\trianglelefteq G$ implies $G_2\trianglelefteq G$).
It then follows from the definition that $E(H) \subseteq E(G)$,
so that, since $E(G)$ centralizes $E(H) \subseteq G_1$, we know that $E(H)$ is abelian. This
implies, again from the definition, that $E(H) = 1$.
For each prime $p$ with $p \le M$, since $\OO_p(H) \in \mathcal T$, by hypotheses,
there exists an abelian subgroup $B_p$ of $\OO_p(H)$ such that $[\OO_p(H) : B_p] \le M$
and such that $B_p$ can be generated by at most $d$ elements.
Set $B$ to be the product of all the $B_p$ for $p \le M$.
Then $B$ is an abelian subgroup of $F(H)$ which can be generated
with at most $d$ elements, and $[F(H) : B] \le M^M$
($B$ is abelian because if $p\neq q$ then the elements of $O_p(H)$ commute
with those of $O_q(H)$).
It follows that $F(H)$ can be generated with at most $M^M + d$
elements.
Now $F(H)/\Phi(F(H))$ is abelian and of square-free exponent
by Lemma \ref{le:centralizeroffrattiniquotient}. Since
$F(H)/\Phi(F(H))$ can be generated by at most $M^M + d$ elements
and all prime divisors of $|F(H)/\Phi(F(H))|$ are smaller than or equal to $M$,
this implies that
$$
|F(H)/\Phi(F(H))| \le \left(M!\right)^{M^M + d}.
$$
Again by Lemma \ref{le:centralizeroffrattiniquotient}, this implies that
$$
[H : F(H)] \le \left(\left(M!\right)^{M^M + d}\right)!.
$$
Now set $A = B R$. Then $A$ is an abelian subgroup of $G_2$ such that
$$
[G_2 : A] = [H : B] \le \left(\left(M!\right)^{M^M + d}\right)!\, M^M
$$
and such that $A$ can be generated by at most $d$ elements. It follows that
$$
[G_1 : A] \le \left(\left(M!\right)^{M^M + d}\right)!\, M^M\, C_1 = C_2.
$$
Hence the lemma holds.
\end{proof}

\begin{cor} \label{co:solvable}
Assume Hypotheses A. Assume the notation of
Lemma \ref{le:solvableinequality}.
Let $S \in \mathcal C$ be solvable. Then, there
exists an abelian subgroup $A$ of $S$ such
that $[S : A] \le C_2$ and such that $A$ can be generated by
at most $d$ elements.
\end{cor}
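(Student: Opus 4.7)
The plan is to observe that for a solvable group the layer is trivial, which reduces the corollary to a direct application of Lemma \ref{le:solvableinequality}.

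More precisely, I would first argue that $E(S) = 1$. By definition, $E(S)$ is the product of the quasisimple subnormal subgroups of $S$, and a quasisimple group $K$ is perfect with $K/\Z(K)$ nonabelian simple, hence nonsolvable. Since every subgroup of the solvable group $S$ is solvable, $S$ contains no quasisimple subgroups, so there are no components and $E(S) = 1$. Therefore $\C_S(E(S)) = \C_S(1) = S$, i.e.\ in the notation of Lemma \ref{le:solvableinequality} we have $S_1 = S$.

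Next, I would apply Lemma \ref{le:solvableinequality} to $G = S \in \mathcal C$. This yields an abelian subgroup $A$ of $S_1 = S$ such that $[S : A] \le C_2$ and $A$ can be generated by at most $d$ elements, which is exactly the conclusion of the corollary. No further estimates are needed, so there is no real obstacle here: the whole content of the corollary is the remark that the hypothesis on $S$ being solvable trivializes the layer, letting us quote the previous lemma directly.
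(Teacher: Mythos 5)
Your argument matches the paper's proof exactly: both observe that solvability of $S$ forces $E(S)=1$ (since quasisimple groups are nonsolvable), so $G_1=\C_S(E(S))=S$, and then quote Lemma \ref{le:solvableinequality}. You have simply spelled out the justification of $E(S)=1$ a bit more explicitly than the paper does.
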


\begin{proof}
This follows immediately from Lemma \ref{le:solvableinequality}
because if $G = S$ then $E(G) = 1$ and $G = G_1 = S$.
\end{proof}

\begin{lem} \label{le:quasisimple}
Assume Hypotheses A. Then there exists a positive constant $C_3$
such that whenever $G \in \mathcal C$ is quasisimple then $|G| \le C_3$.
\end{lem}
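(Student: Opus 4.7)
The plan is to argue by contradiction: suppose the quasisimple groups in $\mathcal{C}$ have unbounded order. Any quasisimple $G$ is a perfect central extension of the simple non-abelian group $S := G/Z(G)$, so by the universal property of Schur covers $Z(G)$ embeds into the Schur multiplier of $S$; in particular, for each fixed isomorphism type of $S$ there are only finitely many isomorphism types of quasisimple $G$ with $G/Z(G) \cong S$. Hence if $|G|$ is unbounded over the quasisimple $G \in \mathcal{C}$, then the set $\Sigma$ of isomorphism classes of quotients $G/Z(G)$ that arise in this way must itself be infinite.

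Next I would apply Lemma \ref{le:simplegroupsset} to $\Sigma$ with constant $K := C_2$, where $C_2$ is the number furnished by Lemma \ref{le:solvableinequality}. This produces some $S_0 \in \Sigma$, together with subgroups $N_0 \subseteq M_0 \subseteq S_0$, such that $M_0$ is solvable, $N_0$ is normal in $M_0$, and $M_0/N_0$ is a Frobenius group whose complement has order strictly greater than $C_2$. I then pick a quasisimple $G_0 \in \mathcal{C}$ with $G_0/Z(G_0) \cong S_0$, and let $\widetilde M$ be the preimage of $M_0$ under the natural projection $G_0 \to S_0$. Because $\mathcal{C}$ is closed under taking subgroups, $\widetilde M \in \mathcal{C}$, and because $\widetilde M$ is a central extension of the solvable group $M_0$ by the abelian group $Z(G_0)$, the group $\widetilde M$ is itself solvable.

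Corollary \ref{co:solvable} now supplies an abelian subgroup $A \subseteq \widetilde M$ with $[\widetilde M : A] \le C_2$. The image $\bar A$ of $A$ under the composition $\widetilde M \to M_0 \to M_0/N_0$ is abelian, and its index satisfies $[M_0/N_0 : \bar A] \le [\widetilde M : A] \le C_2$. On the other hand, Lemma \ref{le:frobeniusgroups} forces $[M_0/N_0 : \bar A]$ to be at least the order of a Frobenius complement of $M_0/N_0$, which by construction exceeds $C_2$. This contradiction produces the desired uniform bound $C_3$.

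The main obstacle I anticipate is cleanly separating the two distinct sources of unboundedness — infinitely many simple quotients versus unbounded centers — and handling the latter by the Schur multiplier observation so that the infinitude of $\Sigma$ is genuinely available as input to Lemma \ref{le:simplegroupsset}. Once this reduction is in place, the combination of Corollary \ref{co:solvable} with the Frobenius index estimate yields the contradiction essentially mechanically.
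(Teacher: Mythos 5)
Your proposal is correct and follows essentially the same approach as the paper: the Frobenius-complement bound via Lemma \ref{le:simplegroupsset}, Corollary \ref{co:solvable}, and Lemma \ref{le:frobeniusgroups} forces $\Sigma$ to be finite, and the Schur multiplier/cover observation converts that into a bound on $|G|$. The paper proves finiteness of $\Sigma$ directly and then bounds $|G|$ by the Schur representation groups, whereas you phrase it as a contradiction, but the ingredients and their roles are identical.
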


\begin{proof}
Let $\Sigma$ be a full set of representatives of the isomorphism
classes of simple groups $S$ of the form $S = G/\Z(G)$ for
some quasisimple $G \in \mathcal C$.

Suppose that $S \in \Sigma$ contains a solvable subgroup $H$ and a normal
subgroup $H_0$ of $H$ such that $H/H_0$ is a Frobenius group
with Frobenius complement of order $k$. Then there exists some
$J \in \mathcal C$ such that $J$ is solvable and $H/H_0$ is
a homomorphic image of $J$. By Corollary \ref{co:solvable}, there exists
some abelian subgroup of $J$ of index at most $C_2$ in $J$.
It follows that there exists an abelian subgroup of $H/H_0$ of
index at most $C_2$. By Lemma \ref{le:frobeniusgroups},
this tells us that $k \le C_2$.
By Lemma \ref{le:simplegroupsset}, this implies that
$\Sigma$ is a finite set.

Let $S \in \Sigma$. Since $S$ is a finite simple group,
there exists a finite
group $R$ called the Schur representation group\footnote{Also called
Schur covering group. See \cite[\S 5A]{isa} and \cite{kar}. The existence
of the Schur representation group is proved in \cite[Theorem 2.10.3]{kar}
(for notation see \cite[\S 2.7]{kar}). Note that the center $\Z(R)$ is isomorphic
to $H^2(S;\CC^{\times})$ and $R/\Z(R)\simeq S$. Since $S$ is finite, we also
have $H^2(S;\CC^{\times})\simeq H_2(S;\ZZ)$, see \cite[\S 2.7]{kar}.
In some respects, the Schur representation
group is an analogue for finite groups
of the universal covering group of a Lie group.} of $S$ such that
if $G$ is a quasisimple group and $G/\Z(G)$ is isomorphic to
$S$ then $G$ is a homomorphic image of $R$.
In particular, we have $|G| \le |R|$.
We can now set $C_3$ to be the maximum of the orders of the
Schur representation groups of the elements of $\Sigma$.
Hence the lemma holds.
\end{proof}

\begin{lem} \label{le:numberofsimples}
Assume Hypotheses A. Then there exists a positive constant $C_4$
such that whenever $G \in \mathcal C$ then the number of
composition factors of $E(G)/\Z(E(G))$ is at most $C_4$.
\end{lem}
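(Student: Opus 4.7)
The plan is to bound the number $n$ of components of $G$, since if $L_1,\dots,L_n$ are the (pairwise commuting) components of $G$ then $E(G)/\Z(E(G))\cong \prod_{i=1}^n S_i$, a direct product of the nonabelian simple groups $S_i=L_i/\Z(L_i)$, and $n$ is exactly the number of composition factors.

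First I would observe that each $L_i$ is a subgroup of $G$ (being subnormal), hence lies in $\mathcal C$, and being quasisimple satisfies $|L_i|\le C_3$ by Lemma~\ref{le:quasisimple}; in particular $|S_i|\le C_3$. By the Feit--Thompson odd order theorem every nonabelian simple group has even order, so each $S_i$ contains an involution $y_i$. (Alternatively one could use pigeonhole to find a single prime $p\le C_3$ dividing $|S_i|$ for at least $n/\pi(C_3)$ of the indices, losing only a multiplicative factor $\pi(C_3)$ in the final bound.)

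The key construction is an abelian $2$-subgroup $W\subseteq E(G)$ requiring many generators. Lift each $y_i$ to a $2$-element $\tilde y_i$ in a Sylow $2$-subgroup of $L_i$ (possible because the Sylow $2$-subgroup of $L_i$ surjects onto that of $S_i$) and set $W=\langle \tilde y_1,\dots,\tilde y_n\rangle$. Because distinct components commute and each $\tilde y_i$ is a $2$-element, $W$ is an abelian $2$-group, and its image in $\prod_j S_j$ is $\bigoplus_i\langle y_i\rangle$, an elementary abelian group of rank $n$. Hence $W$ surjects onto a group requiring $n$ generators, so $d(W)\ge n$.

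Finally I apply Hypothesis~A to a Sylow $2$-subgroup $P$ of $G$ containing $W$: since $P$ is a $2$-group, $P\in\mathcal T(\mathcal C)$, so there exists an abelian $A\subseteq P$ with $[P:A]\le M$ and $d(A)\le d$. Then $A\cap W\subseteq W$ has index at most $M$, and being a subgroup of the finitely generated abelian group $A$ it satisfies $d(A\cap W)\le d$. The standard bound
$$d(W)\le d(A\cap W)+d(W/(A\cap W))\le d+\log_2 M,$$
combined with $d(W)\ge n$, yields $n\le d+\log_2 M$, and one may take $C_4=d+\log_2 M$.

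The main obstacle is producing a single subgroup of $G$ that simultaneously reflects the multiplicity of the components (forcing any abelian section to require many generators) and lies in $\mathcal T(\mathcal C)$ (so that Hypothesis~A applies). The trick is that $2$-elements chosen from distinct pairwise-commuting components automatically assemble into an abelian $2$-group, which sits inside a Sylow $2$-subgroup of $G$; these Sylow subgroups are $p$-groups and hence belong to $\mathcal T(\mathcal C)$. Everything else reduces to a routine computation with generators of finite abelian groups.
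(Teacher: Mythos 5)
Your proposal is correct and uses the same central idea as the paper: choose one involution in each simple factor $S_i$ (via Feit--Thompson), lift to commuting $2$-elements in the components, assemble them into an abelian $2$-subgroup whose minimal number of generators is at least $n$, and then invoke Hypothesis~A because $2$-groups lie in $\mathcal{T}(\mathcal{C})$. The only difference is a harmless detour: the paper applies Hypothesis~A directly to the abelian $2$-subgroup $B=\langle t_1,\dots,t_n\rangle$ itself (which is already in $\mathcal{T}(\mathcal{C})$), obtaining $d(B)\le d+M$ at once, whereas you pass to an ambient Sylow $2$-subgroup $P$ of $G$ and then intersect with $W$; your route yields the slightly sharper constant $d+\log_2 M$, and your opening paragraph invoking $|L_i|\le C_3$ is unnecessary, as you yourself note.
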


\begin{proof}
We set $C_4 = d + M$.
Let $G \in \mathcal C$.
By, for example, \cite[Theorem 9.7]{isa}, $E(G)/\Z(E(G))$
is a direct product of simple groups. Say
$$
E(G)/\Z(E(G)) = S_1 \times \cdots \times S_n
$$
where $n$ is the composition length and
the $S_1,\ldots,S_n$ are non-abelian simple groups.
For each $i=1,\ldots,n$, pick a subgroup $T_i$ of order two of $S_i$
(we know that $T_i$ exists by the celebrated Feit--Thompson theorem
\cite{FT}, using the fact that $S_i$ is simple and non-abelian, hence
non-solvable),
and let $t_i$ be a $2$-element of $E(G)$ which projects onto
a generator for $T_i$. Each $t_i$ belongs to a different
component times the center of $E(G)$, so that they
all commute with each other, see for example
\cite[Theorem 9.4]{isa}. Let $B$ be the subgroup of $G$ generated
by the $t_1,\ldots,t_n$. Now $B$ is an abelian $2$-subgroup of $G$,
and it can not be generated by fewer than $n$ elements.
By our hypotheses, since $B \in \mathcal T$, there exists an abelian subgroup $C$
of $B$ such that $C$ can be generated by at most $d$ elements
and $[B : C] \le M$. Hence, $B$ can be generated by
at most $d + M = C_4$ elements. It follows that
$n \le C_4$, and the lemma holds.
\end{proof}

\begin{cor} \label{co:layer}
Assume Hypotheses A. Then there exists a positive constant $C_5$
such that whenever $G \in \mathcal C$ then
$|E(G)| \le C_5$.
\end{cor}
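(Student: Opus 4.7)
The plan is to combine Lemma \ref{le:quasisimple} (which bounds the order of each individual component) with Lemma \ref{le:numberofsimples} (which bounds the number of components) to bound the order of the central product $E(G)$. So I would simply set $C_5 = C_3^{C_4}$.

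More precisely, the first step is to recall that by definition $E(G) = L_1 L_2 \cdots L_n$, where $L_1, \ldots, L_n$ are the components of $G$, i.e., the quasisimple subnormal subgroups. Each $L_i$ is a subgroup of $G$, hence lies in $\mathcal{C}$ by the closure hypothesis in Hypotheses~A, and therefore Lemma \ref{le:quasisimple} gives $|L_i| \le C_3$ for each $i$.

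The second step is to bound $n$. By, for example, \cite[Theorem~9.7]{isa}, the images of distinct components in $E(G)/\Z(E(G))$ give a direct product decomposition into non-abelian simple groups, so $n$ equals the number of composition factors of $E(G)/\Z(E(G))$, which is at most $C_4$ by Lemma \ref{le:numberofsimples}. The third and final step is the trivial estimate
$$
|E(G)| = |L_1 L_2 \cdots L_n| \le |L_1|\,|L_2|\cdots|L_n| \le C_3^n \le C_3^{C_4},
$$
so setting $C_5 = C_3^{C_4}$ does the job. There is no real obstacle here; the corollary is essentially a bookkeeping consequence of the two previous lemmas, since every mechanism used to bound the size and number of the components has already been developed.
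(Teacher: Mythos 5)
Your proof is correct and follows exactly the paper's argument: set $C_5 = C_3^{C_4}$, bound each component's order by $C_3$ via Lemma \ref{le:quasisimple}, bound the number of components by $C_4$ via Lemma \ref{le:numberofsimples}, and multiply. The paper states this more tersely but the reasoning is identical.
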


\begin{proof}
Set $C_5 = C_3^{C_4}$. The results follows immediately from
Lemma \ref{le:quasisimple} and Lemma \ref{le:numberofsimples}
since $E(G)$ is the product of as many components as
the composition length of $E(G)/\Z(E)$.
\end{proof}

\begin{teo} \label{te:main}
Assume Hypotheses A.
Then, there exists some positive integer $C_0$
such that $\mathcal C$ satisfies the Jordan property $\mathcal J(C_0,d)$.
\end{teo}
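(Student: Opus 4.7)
The plan is to assemble Theorem \ref{te:main} directly from the two previously established pillars: the bound on $|E(G)|$ provided by Corollary \ref{co:layer}, and the bound on $G_1 = \C_G(E(G))$ provided by Lemma \ref{le:solvableinequality}. Given any $G \in \mathcal C$, I have an abelian subgroup $A \subseteq G_1$ with $[G_1:A] \le C_2$ that is generated by at most $d$ elements. Since $A$ is abelian as a subgroup of $G_1$, it is automatically abelian as a subgroup of $G$, and the generator count is preserved. Hence the whole task reduces to showing that $[G : G_1]$ is bounded by a constant depending only on $M$ and $d$.

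For that bound, the key observation is that $E(G)$ is a characteristic subgroup of $G$ (it is generated by all subnormal quasisimple subgroups, hence preserved by every automorphism of $G$), so the conjugation action gives a homomorphism $\rho \colon G \to \Aut(E(G))$. By the standard characterization of $G_1$, the kernel of $\rho$ is precisely $\C_G(E(G)) = G_1$, so $G/G_1$ embeds into $\Aut(E(G))$. Corollary \ref{co:layer} supplies an absolute bound $|E(G)| \le C_5$, and the crude estimate $|\Aut(H)| \le (|H|-1)!$ for a finite group $H$ then yields $[G : G_1] \le |\Aut(E(G))| \le (C_5-1)!$.

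Combining these two estimates gives
\[
[G : A] = [G : G_1]\,[G_1 : A] \le (C_5-1)!\cdot C_2,
\]
so I would set $C_0 = (C_5-1)!\cdot C_2$. Since $A$ is abelian and is generated by at most $d$ elements, $\mathcal C$ satisfies $\mathcal J(C_0, d)$, completing the proof.

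Because the two hard inputs are already in hand, I do not anticipate any real obstacle at this final stage; the only point deserving a sentence of care is the justification that $E(G)$ is characteristic (and hence normal) in $G$, so that the conjugation action on $E(G)$ is well defined and has kernel exactly $G_1$. Everything else is elementary bookkeeping.
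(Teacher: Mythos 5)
Your proof is correct and follows essentially the same route as the paper: combine Lemma \ref{le:solvableinequality} (abelian $A\subseteq G_1$ with $[G_1:A]\le C_2$) with Corollary \ref{co:layer} ($|E(G)|\le C_5$), bound $[G:G_1]$ by embedding $G/G_1$ into $\Aut(E(G))$, and multiply the two indices. The only cosmetic difference is that you use the slightly sharper estimate $|\Aut(E(G))|\le(|E(G)|-1)!$ where the paper settles for $[G:G_1]\le C_5!$; the substance is identical.
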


\begin{proof}
We use the notation of Lemma \ref{le:solvableinequality} and
Corollary \ref{co:layer}. Set $C_0 = C_5!\, C_2$.
Let $G \in \mathcal C$. Set $G_1 = \C_G(E(G))$. By
Lemma \ref{le:solvableinequality} there is an abelian subgroup
$A$ of $G_1$ such that $[G_1 : A] \le C_2$ and such that
$A$ can be generated by at most $d$ elements.
By Corollary \ref{co:layer}, we have $|E(G)| \le C_5$,
and this implies that $[G : G_1] \le C_5!$.
This implies that $[G : A] \le C_0$ and completes
the proof of the theorem.
\end{proof}

\end{document}